\pgfplotsset{compat=newest}
\def\neweq#1{\begin{equation}\label{#1}}
	\def\endeq{\end{equation}}
\def\R{\mathbb{R}}
\def\eps{{\varepsilon}}
\newtheorem{proposition}{Proposition}
\newtheorem{theorem}[proposition]{Theorem}
\newtheorem{lemma}[proposition]{Lemma}
\theoremstyle{remark}
\newtheorem{remark}[proposition]{Remark}
\theoremstyle{definition}
\newtheorem{definition}[proposition]{Definition}
\numberwithin{equation}{section}
\numberwithin{proposition}{section}
\title[Equilibrium configurations of a 3D fluid-beam interaction problem]{Equilibrium configurations of a\\ 3D fluid-beam interaction problem}
	\author{Vincenzo Bianca, Edoardo Bocchi and Filippo Gazzola}
    \address{
    Dipartimento di Matematica, Politecnico di Milano, Piazza Leonardo da
Vinci 32, 20133 Milano, Italy}
\email{vincenzo.bianca@polimi.it, edoardo.bocchi@polimi.it, filippo.gazzola@polimi.it}
\begin{document}

	\begin{abstract} We study a fluid-structure interaction problem between a viscous incompressible fluid and an elastic beam with fixed endpoints in a static setting. The 3D fluid domain is bounded, nonsmooth and non simply connected,
the fluid is modeled by the stationary Navier-Stokes equations subject to inflow/outflow conditions. The structure is modeled by a stationary 1D beam equation with a load density involving the force exerted by the fluid and, thereby, may vary its position. In a smallness regime, we prove the existence and uniqueness of the solution  to the PDE-ODE coupled system. \bigskip

\emph{AMS Mathematics Subject Classification:} 35Q35, 76D05, 74F10.\par
\emph{Keywords and phrases:} Fluid-structure interaction, Navier-Stokes equations, beam equation, PDE-ODE system, existence and uniqueness of equilibria.
\end{abstract}
	\date{}
	\maketitle
	
	\section{Introduction}
	We consider a fluid-structure interaction (FSI) problem in the rectangular parallelepiped
	$$P=(-R,R)\times(-1,1)^2,\hspace{.3in}R\gg1$$
	representing a wind tunnel in which artificial airflows are set at the inlet and outlet. The wind tunnel is crossed by a beam occupying a
	compact connected set
	$\mathcal{B}$ such that $\Omega = P\setminus\mathcal{B}$ is not simply connected. The fluid is viscous, homogeneous, incompressible and governed by the stationary Navier-Stokes equations
	\neweq{SNS}
	-\eta\Delta u+u\cdot\nabla u+\nabla p=0,\quad\nabla\cdot u=0\quad\mbox{in} \quad \Omega,
	\endeq
	where $u=(u_1,u_2,u_3)\in\R^3$ is the velocity vector field, $p$ is the scalar pressure and $\eta>0$ is the kinematic viscosity. The fluid is moving due to inflows/outflows located at the faces $\{x=\pm R\}$.
	The beam $\mathcal{B}$ is either {\em hinged or clamped} to $\partial P$ at $y=\pm1$ and its cross-sections $\mathcal{B}_y$ (for $-1\le y\le1$)
are usually irregular hexagons with
	rounded corners, see Figure \ref{dom1}.
	
	\begin{figure}[ht]
		\begin{center}
			\includegraphics[scale=0.25]{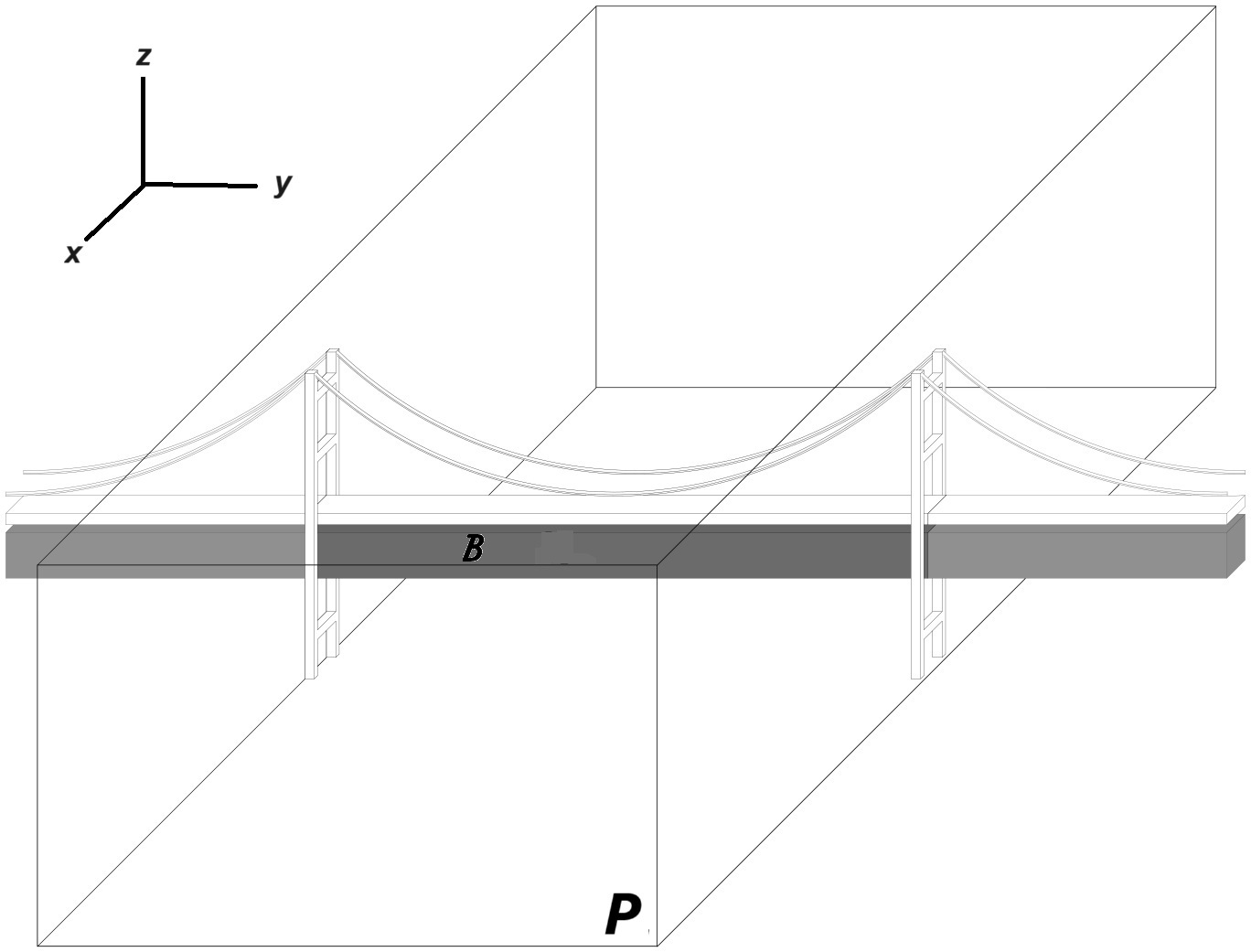}\qquad\qquad\includegraphics[scale=0.36]{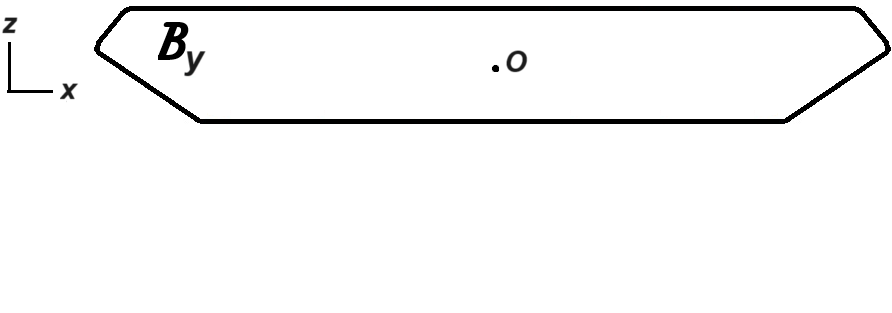}
		\end{center}
		\caption{The fluid domain $\Omega=P\setminus\mathcal{B}$ and a cross-section $\mathcal{B}_y$.}\label{dom1}
	\end{figure}
	
This model has natural applications in the stability of suspension bridges under the action of the wind \cite{denisgaldi3,irena,gazbook}.
The bridge is divided in three adjacent spans, separated by two fixed piers and we focus our attention on the
most vulnerable center span, see the left picture in Figure \ref{dom1}. \par
The wind hits the deck horizontally which modifies the flow that goes around the deck: in the (downwind) leeward, the flow creates
asymmetric vortices that
generate an orthogonal force $L=L(u,p,y)$ named {\em lift} \cite{Acheson1990, PaPriDeLan10}, which triggers the vertical displacements of the deck;
see the experiment in \cite{wolfgang4}. Therefore, $\mathcal{B}$ moves vertically and reaches a new position $\mathcal{B}^h$, where
$h=h(y)$ is a smooth function describing its vertical displacement. This also changes the fluid domain and \eqref{SNS} is satisfied in
$\Omega_h=P\setminus\mathcal{B}^h$. Further forces (e.g., the drag and torque) may also appear, see
again \cite{Acheson1990, PaPriDeLan10}, but here we do not consider them. The resulting movements of the deck create more perturbations of the flow
and generate a full FSI.\par
In absence of a flow, the deck $\mathcal{B}$ is maintained in its equilibrium position ($h\equiv0$) by gravity combined with some
elastic forces due to the cables, hangers, and bending of the deck so that the overall forces $f=f(h)$ have the same sign as $h$.
In the static setting inside a wind tunnel, the vertical position $z=h(y)$ of the barycenter of the cross-section $\mathcal{B}_y$ obeys to the beam-type equation
	\begin{equation}\label{compatibility}
		h''''(y)+f\big(h(y)\big)=L(y, u, p)=L\big(y, h(y)\big)
	\end{equation}
	complemented with some boundary conditions, see Section \ref{sec_geometric}. In \eqref{compatibility} the lift $L$
	also depends on $h$ since $(u,p)$ solves \eqref{SNS} in $\Omega_h$.\par
	The equilibrium configurations of the FSI problem are obtained by solving
	the {\em system} \eqref{SNS}-\eqref{compatibility}.
	It is our purpose to analyse existence, uniqueness, stability of the equilibrium configurations of
	\eqref{SNS}-\eqref{compatibility} as the inflows/outflows vary. In \cite{simona, edofil,edofil2,GS} the 2D version of this problem was investigated, where the structure represents the cross-section of the deck. The associated dynamical setting was studied in \cite{BHPS, pata, Patriarca2022}.
	In 3D, the only known result \cite{clara} is that, when $\mathcal{B}$ is a rectangular plate and under {\em symmetry} assumptions on both
	$\Omega$ and the inflow/outflow (Poiseuille type), for small flows the unique equilibrium position remains $\Omega$; see also \cite{denisgaldi, FGS2020} for related problems. These contributions
are connected to one of the classical Leray's problems \cite{AM2,AM3}.
 Other interactions between fluids and elastic structures were extensively studied in the last decade, both mathematically \cite{KukaMu,Bociu20,GraHil16,MuS22} and from an engineering point of view \cite{Kanaris-Grigoriadis-Kassinos2011}, see also references therein. \par	
	In Section \ref{sec_geometric} we describe in full detail the geometric and physical frameworks, also giving
	the precise assumptions used in the sequel.\par
In Section \ref{fluid-section} we discuss existence, uniqueness, and a priori bounds for
	the solutions to \eqref{SNS}, {\em assuming that $h$ is given}, see Theorem \ref{the_ex_regu}. The difficult task here is the choice of the functional spaces for dealing with {\em strong solutions}, see Definition
\ref{def_1}. We need enough regularity to have embedding into boundary spaces where the (1D) lift $L=L(y,u,p)$ is well-defined, see \eqref{lift} below. Since all the $\Omega_h$ are merely Lipschitz (and nonconvex),
to gain regularity we first perform a reflection with respect to some faces of $P$. Moreover, at the other angular points of $\partial\Omega_h$, since {\em we cannot increase regularity} in Hilbertian Sobolev spaces as in the classical works by Grisvard \cite{GrisvardBUMI,grisvardpisa,grisvardlibro}, we {\em increase
integrability} by switching to non-Hilbertian spaces as in the monograph by Maz'ya \& Rossmann \cite{Mazya-Rossmann2010}, without falling in the more delicate world of weighted spaces. \par
In Section \ref{sec_FSI} we take advantage of this functional framework and we tackle the
FSI problem \eqref{SNS}-\eqref{compatibility} in $\Omega_h$; we prove that in the uniqueness regime for \eqref{SNS} (small flows) and in a neighbourhood
of $h=0$ there exists a unique equilibrium configuration that varies with Lipschitz continuity with respect to the flows, see Theorem \ref{main-theo}.
The proof is obtained with a fixed-point procedure that requires the introduction of suitable functional spaces also for the beam displacement $h$.
By exploiting Theorem \ref{the_ex_regu}, we first show that for a given $\overline{h}$ there exists a unique solution $(u(\overline{h}), p(\overline{h}))$ to \eqref{the_ex_regu} satisfying some bounds. Then we prove that, for such $(u(\overline{h}), p(\overline{h}))$, there exists a unique (strong) solution $h$ to \eqref{compatibility} with $L(y, \overline{h})$ under the ``physical'' boundary conditions described in Section \ref{sec_geometric}.
Finally, through a contraction principle we find a unique fixed point $(h, u,p)$, solving \eqref{SNS}-\eqref{compatibility}. We conclude the paper with two results in a symmetric framework, see Theorems \ref{symmNS} and \ref{main-theo-symm}, and give an explanation of the consequent physical phenomenon.
	
	\section{Geometric and physical frameworks}\label{sec_geometric}
	
	In the sequel, we denote by
	\begin{equation*}
		\mathbb{H}\mbox{ either the space }H_0^2(-1,1)\mbox{ or }H^2\cap H_0^1(-1,1).
	\end{equation*}
	It is well-known \cite[Theorem 2.31]{GGSbook} (valid for {\em any} bounded Lipschitz domain in $\R^n$ and for {\em any} $n\ge1$), that
	 $\mathbb{H}$ is a Hilbert space endowed with the scalar product and norm
	\begin{equation}\label{Hnorm}(h_1, h_2)_{\mathbb{H}}= \int_{-1}^1 h_1'' h_2'', \qquad   \|h\|_\mathbb{H}=\|h''\|_{L^2(-1,1)}\sim \|h\|_{H^2(-1,1)}. \end{equation}
Moreover, we have the embedding inequality
\begin{equation}\label{SH-embedding}
\|h\|_{L^\infty(-1,1)}\leq S_{\mathbb{H}}\|h\|_{\mathbb{H}} \quad \text{where} \quad \frac{1}{S_{\mathbb{H}}}=\min_{h\in\mathbb{H}}\, \frac{\|h\|_{\mathbb{H}}}{\|h\|_{L^\infty(-1,1)}},
    \end{equation}see Appendix for an upper bound of $S_\mathbb{H}$.
Analogously, we denote by
\begin{equation*}
		\mathbb{H}^4\mbox{ either the space $H^4\cap H^2_0(-1,1)$ or $\{h\in H^4(-1,1)$ s.t. $h(\pm 1)=h''(\pm1)=0 \}$}
	\end{equation*} endowed with the scalar product and norm
	\begin{equation}\label{H4norm}(h_1, h_2)_{\mathbb{H}^4}= \int_{-1}^1 h_1'''' h_2'''', \qquad   \|h\|_{\mathbb{H}^4}=\|h''''\|_{L^2(-1,1)}\sim \|h\|_{H^4(-1,1)},\end{equation} see again Appendix. But \eqref{H4norm} \emph{is not} a norm in $H^4 \cap H^1_0(-1,1)$: to see this, note that $h(y)=1-y^2 \in H^4 \cap H^1_0(-1,1)$ has null $\mathbb{H}^4$-norm.\par
	Consider a $C^{1,1}$ compact, connected and simply connected set $\mathcal{B}^*\subset\mathbb{R}^3$ such that
	\begin{equation}\label{B*}\begin{array}{c}
			X^+=\max_{(x,y,z)\in\mathcal{B}^*}x<R, \quad X^-=-\min_{(x,y,z)\in\mathcal{B}^*}x<R,\\[5pt]
			
			Z^+=\max_{(x,y,z)\in\mathcal{B}^*}z<1, \quad Z^-=-\min_{(x,y,z)\in\mathcal{B}^*}z<1 	\\[5pt]
			
			\max_{(x,y,z)\in\mathcal{B}^*}y>1,\hspace{.3in} \min_{(x,y,z)\in\mathcal{B}^*}y<-1,\hspace{.3in}\mathcal{H}^2(\mathcal{B}_y^*)>0
		\end{array}
	\end{equation}
	where $\mathcal{H}^2$ denotes the $2$-dimensional Hausdorff measure and
	$
	\mathcal{B}_y^*=\mathcal{B}^*\cap(\mathbb{R}\times\{y\}\times\mathbb{R})
	$
	is the cross-section of $\mathcal{B}^*$, namely the intersection with a plane parallel to both the $x$ and $z$ axes. Let
	$$\mathcal{B}=\overline{P}\cap\mathcal{B}^*$$be the portion of $\mathcal{B}^*$ contained in $P$,
	so that its boundary $\partial\mathcal{B}$ consists of three parts
	\neweq{Bboundary}
	\partial\mathcal{B}=\mathcal{B}^*_{-1}\cup \mathcal{B}^*_1\cup\mathcal{S}
	\endeq
	where $\mathcal{S}$ is a $C^{1,1}$-surface that does not intersect the faces $\{y=\pm1\}$. For any $y\in[-1,1]$ we simply denote
	$\mathcal{B}_y=\mathcal{B}_y^*$. For any $h\in\mathbb{H}$, the non-simply connected domain
	\neweq{Omegah}
	\Omega_h=P\setminus\mathcal{B}^h={P}\setminus\{(x,y,z+h(y))\mbox{ with }(x,y,z)\in\mathcal{B}\}
	\endeq
	is the region occupied by the fluid; in particular, $\mathcal{B}^0=\mathcal{B}$ and $\Omega_0=\Omega$. We assume that
	\begin{equation*}
	\begin{array}{cc}
		\mathcal{B}_y\subset(-R,R)\times(-1,1)\mbox{ is a connected planar compact set}\\
		\mbox{with barycenter at $(x,z)=(0,0)$ for all $|y|\le1$,}
	\end{array}
	\end{equation*}
	see again Figure \ref{dom1} (right). Then, in view of \eqref{Omegah}, the barycenter of the cross-section $\mathcal{B}_y^h$ is located at
	$(x,z)=(0,h(y))$. In Figure \ref{duebeams}, to simplify the pictures we sketch the domains $\Omega$ and $\Omega_h$ for {\em nonsmooth}
	(rectangular) cross-sections.
	
	\begin{figure}[ht]
		\begin{center}
			\includegraphics[scale=0.4]{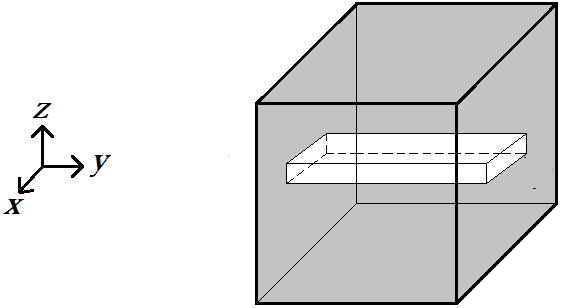}\qquad\qquad\includegraphics[scale=0.4]{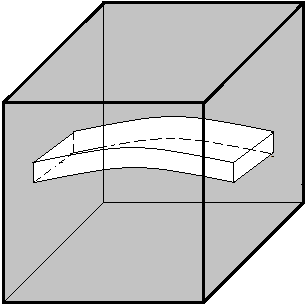}
		\end{center}
		\caption{Examples of fluid domains $\Omega$ (left) and $\Omega_h$ (right).}\label{duebeams}
	\end{figure}
	
	We denote the faces of the parallelepiped $P$ by
	\begin{equation*}
		\Gamma_x^-=\{-R\}\times(-1,1)\times(-1,1), \qquad \Gamma_x^+=\{R\}\times(-1,1)\times(-1,1),
	\end{equation*}
	\begin{equation*}
		\Gamma_y^-=(-R,R)\times\{-1\}\times(-1,1),\qquad 	\Gamma_y^+=(-R,R)\times\{1\}\times(-1,1),
	\end{equation*}
	\begin{equation*}
		\Gamma_z^-=(-R,R)\times(-1,1)\times\{-1\}, \qquad \Gamma_z^+=(-R,R)\times(-1,1)\times\{1\}.
	\end{equation*}
	Hence, $\partial\Omega_h$ consists of the two closed squares $\overline{\Gamma_x^\pm}$, the two closed rectangles $\overline{\Gamma_z^\pm}$
	and the two closed non-simply connected planar regions $\overline{\Gamma_y^\pm\setminus\mathcal{B}_{\pm1}}$.
	After putting $\varepsilon^\pm(h)=\mathrm{dist}(\mathcal{B}^h, \Gamma_z^\pm)$, it follows from \eqref{B*} that
	\begin{equation*}  \ \mathcal{B}^h \subset [-X^-,X^+]\times [-1,1]\times[-1+ \eps^-(h), 1-\eps^+(h)]. \end{equation*}
Although the boundary of $\mathcal{B}^*$ is assumed to be smooth ($C^{1,1}$), the domain $\Omega_h=P\setminus\mathcal{B}^h$ is {\em merely Lipschitz}
	not only because of the vertices and edges of the parallelepiped $P$ but also because of the intersection between $\mathcal{B}^h$ and the faces
	$\Gamma_y^\pm$ of $P$. In fact, a (nonsmooth) vertical displacement $h=h(y)$ could also affect the regularity of $\mathcal{S}$, see \eqref{Bboundary},
	but, as we shall see, a posteriori $h$ will be sufficiently smooth.\par	

	Since $\mathbb{H}$ compactly embeds into $C([-1,1])$,  we have the lower bounds
	\begin{equation}\label{bound-eps+-}\begin{aligned}
			&\varepsilon^+(h)\geq 1-\max_{[-1,1]} h -Z^+ \geq 1- \|h\|_{L^\infty(-1,1)} -Z^+\\
			&\varepsilon^-(h)\geq1+\min_{[-1,1]} h -Z^-\geq 1- \|h\|_{L^\infty(-1,1)} - Z^-
		\end{aligned}\qquad\forall h\in\mathbb{H}.
	\end{equation}
A sufficient condition to prevent collisions (intersections) between $\mathcal{B}^h$ and the top/bottom boundaries of $P$, namely $\eps^\pm (h)=0$, is then that there exists \begin{equation*}
1\leq \omega<\frac{1}{\max\{Z^+,Z^-\}}
\end{equation*} such that
\begin{equation}\label{eq_bound_infty}
\left\|h\right\|_{L^\infty(-1,1)} <	1- \omega\max\{Z^+,Z^- \}.\\[5pt]
\end{equation}

We {\em do not assume} that all the cross-sections $\mathcal{B}_y$ have the same shape, such as for the suspension bridge over the Grand Canyon in
	Zhangjiajie (China), see Figure \ref{dom2}.
	\begin{figure}[ht]
		\begin{center}
			\includegraphics[scale=0.36]{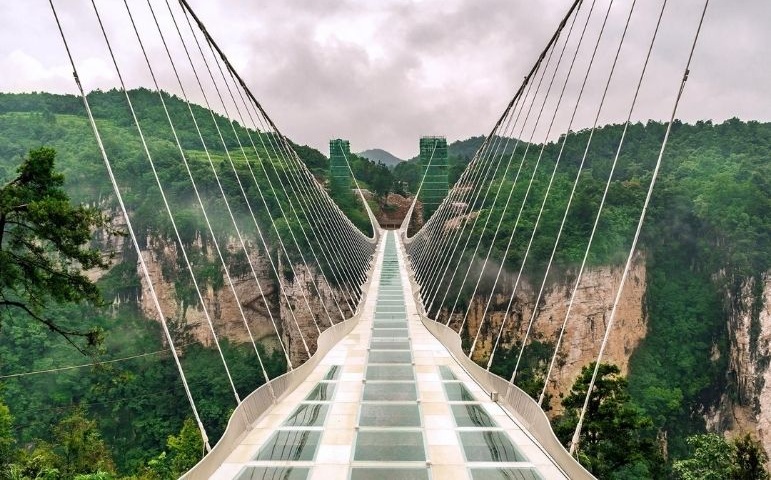}
		\end{center}
		\caption{The Zhangjiajie Bridge.}\label{dom2}
	\end{figure}
	For the sake of stability, the deck of ``asymmetric'' bridges should have wider cross-sections at the extremals $y=\pm1$ and smaller
	cross-sections in proximity of the center $y=0$ of the span. This is why the angles of attachment between $\mathcal{B}$ and $\Gamma_y^\pm$
	may be larger than the right angle. To make this precise, consider $A=(x,y,z)\in\partial\mathcal{B}_{\pm1}$, namely a point where
	$\mathcal{B}^h$ is attached to the faces $\Gamma_y^\pm$ of $P$. For any $A\in\partial\mathcal{B}_1$
	(same construction for $A\in \partial \mathcal{B}_{-1}$) and $h\in\mathbb{H}$
	there exists $\alpha\in[0,2\pi)$ such that $A$ belongs to the curve
	\begin{equation}\label{Palpha}
	\mathbb{C}^{h,\alpha}=\partial\mathcal{B}^h\cap\mathcal{P}_\alpha\quad\mbox{where}\quad
	\mathcal{P}_\alpha=\{(x,y,z)\in\R^3;\, (\cos\alpha)x+(\sin\alpha)z=0\}.
	\end{equation}
	Within the plane $\mathcal{P}_\alpha$, the tangent line to $\mathbb{C}^{h,\alpha}$ forms an angle of measure $\theta=\theta(A)$ with the line
	$\{y=1\}\cap\mathcal{P}_\alpha$, see Figure \ref{angle}.
	\begin{figure}[ht]
		\begin{center}
			\includegraphics[scale=0.5]{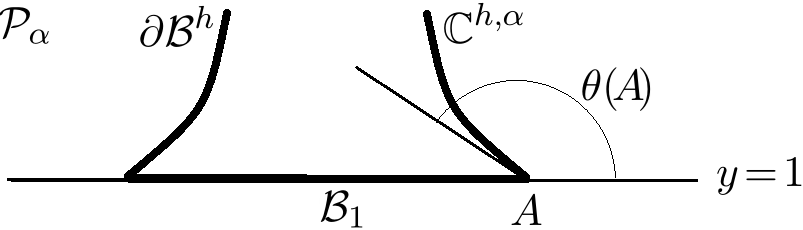}
		\end{center}
		\caption{The curve $\mathbb{C}^{h,\alpha}$ in the plane $\mathcal{P}_\alpha$.}\label{angle}
	\end{figure}
	
	The angle $\theta(A)$ may depend on both $\alpha\in[0,2\pi)$ and $h\in\mathbb{H}$. By continuity of $A\mapsto \theta(A)$ and compactness of  $\partial \mathcal{B}_{\pm1} $, we infer that
	\begin{equation}\label{eq_theta_*}
		\theta_*=\sup_{\|h\|_{\mathbb{H}}< C_\mathbb{H}} \max_{\alpha\in [0,2\pi]} \max_{A\in\partial \mathcal{B}_{\pm1}}\theta(A)<\pi
	\end{equation}
	where $C_\mathbb{H}$ is sufficiently small to ensure \eqref{eq_bound_infty} in view of \eqref{SH-embedding}. Mathematically, any $\theta^*\in (0, \pi)$ is allowed but the case $\theta^*\in (0, \pi/2)$ does not physically describe a real bridge. The case $\theta(A)=\pi/2$ may occur, for instance if $A$ belongs to the upper part of the deck (where cars or pedestrians move).

From now on, with the convention $\tfrac{1}{0}=\infty$, we fix $\sigma$ such that    \begin{equation}\label{eq_sigma}
    \text{if}\  0< \theta_* < \frac{\pi}{2}, \quad
    0<\sigma < 1, \qquad  \text{if} \ \frac{\pi}{2}\leq \theta_* < \pi,  \quad
    0<\sigma < \min\left\{\frac{2\pi-2\theta_*}{2\theta_*-\pi},1\right\}.
\end{equation} The choice of $\sigma$ is fundamental for the regularity of the solution to the Navier-Stokes equations, see Definition \ref{def_1}, and for the definition of the lift that requires one-dimensional traces in the fluid domain, see \eqref{lift}. We complement \eqref{SNS} with inflow/outflow conditions at the inlet/outlet $\Gamma_x^\pm$.
	Given $$\begin{array}{c}V\in W^{2-\tfrac{1}{2+\sigma}, 2+\sigma }(-1,1)^2\subset C([-1,1]^2)  \\[5pt] \text{such that} \quad  V(y,-1)=0 \ \text{for} \ |y|\le1,  \quad V(\pm1 ,z)=0 \ \text{for} \ |z|\le1,\end{array}$$ we take $$V_i, V_o \in W^{2-\tfrac{1}{2+\sigma}, 2+\sigma }(\Gamma_x^\pm)$$ verifying
	\begin{align}
		&\quad \quad V_i=V_o=V\quad \mbox{on }\quad \overline{\Gamma_{y}^\pm \setminus \mathcal{B}_{\pm 1}}\cup\Gamma_z^\pm, \qquad
		\int_{\Gamma_x^-} V_i=\int_{\Gamma_x^+} V_o=1.\label{flux}
	\end{align}
	For $\gamma>0$ we consider the boundary-value problem
\begin{equation}\label{BVprobRR}
	\begin{aligned}
		&-\eta\Delta u+u\cdot\nabla u+\nabla p=0,\quad\nabla\cdot u=0\quad\text{in}\quad \Omega_h\\[2pt]
		&u=0 \quad \text{on} \quad  \partial \mathcal{B}^h ,\quad u=\gamma Ve_1\quad \text{on} \quad \overline{\Gamma_{y}^\pm \setminus \mathcal{B}_{\pm 1}}\cup\Gamma_z^\pm\\[2pt]
		&u= \gamma V_ie_1\quad \text{on} \quad \Gamma_x^-,\quad u= \gamma V_oe_1\quad\text{on} \quad \Gamma_x^+.
	\end{aligned}
\end{equation}
	Since the fluid is incompressible, the flux $\gamma>0$ is constant through any cross-section $\{x=x_0\}$ (for $|x_0|\le R$) of $\Omega_h$:
	this is why \eqref{flux} is a {\em necessary condition} for the existence of solutions to \eqref{BVprobRR}.
Moreover, \eqref{Omegah} shows that the volumes of $\Omega$ and $\Omega_h$ coincide, as necessary for incompressible fluids. Finally, in \eqref{BVprobRR} and all the subsequent equations, the pressure $p$ is defined up to the addition of a constant
and, without further mention, we will always assume that {\it the mean value over $\Omega_h$ of the involved pressure is zero}.\par	
	For a solution $(u,p)$ to \eqref{BVprobRR}, the lift force acting vertically (in the $z$-direction) on a cross-section $\mathcal{B}_y^h$ for a given
	$y\in[-1,1]$ is computed through the formula
	\begin{equation}\label{lift}
		L(y, u,p)= -e_3\cdot\int_{\partial\mathcal{B}_y^h} \mathbb{T}(u, p)n= -e_3\cdot\int_{\partial\mathcal{B}_y^h}[\eta(\nabla u+\nabla^Tu)-p\mathbb{I}]n,
	\end{equation}
	where $\mathbb{I}$ is the $3\times3$ identity matrix and $n$ is the outward unit normal vector to $\partial\mathcal{B}^h$, pointing inside $\mathcal{B}^h$.
	Since $\mathcal{B}^*$ is of class $C^{1,1}$, the normal vector $n$ is well-defined and, as we shall see, the solution
	$(u,p)$ is smooth enough to ensure that  the integrand in \eqref{lift} is bounded.\par
	To model the behaviour of $\mathcal{B}$, we set up a one-dimensional horizontal thin beam covering the interval $(-1,1)$.
	If $h(y)$ denotes the vertical displacement of the beam (in the $z$-direction)
	from the equilibrium position ($\equiv0$) at the point $y\in(-1,1)$, from \cite[Section 1.1.1]{GGSbook} we know that the energy
	density $E_b$ stored by {\em bending} the beam is proportional to the square of the curvature:
	\begin{equation}\label{Jb-gs}
		E_b(h)=\frac{EI}{2}\int_{-1}^{1}\frac{h''(y)^{2}}{\left(1+h'(y)^{2}\right) ^{3}}\sqrt{1+h'(y)^{2}}\, dy=
		\frac{EI}{2}\int_{-1}^{1}\frac{h''(y)^{2}}{\left(1+h'(y)^{2}\right)^{5/2}}\, dy,
	\end{equation}
	where $EI>0$ is the flexural rigidity. Formula \eqref{Jb-gs} highlights the curvature and the arclength.
	The functional $E_b$ {\em does not include the stretching} energy, a term measuring the length variation of the beam
	since we consider beams free to move in horizontal directions at their endpoints, as at the piers in suspension bridges.
	For small displacements of the beam, in particular for small $h'$, the approximated bending energy reads
	\neweq{approxenergy}
	E_b(h)\approx\int_{-1}^{1}\frac{h''(y)^{2}}2 \, dy,
	\endeq
	in which we normalised the flexural rigidity.\par
	The beam is subject to restoring forces $f=f(h)$ that satisfy
	\neweq{assf}
	f\in{\rm Lip}(\R)\, ,\quad f(0)=0\, ,\quad h\mapsto f(h)\mbox{ is non-decreasing.}
	\endeq
	As mentioned in the introduction, these forces are due to the cables, hangers, and gravity. From a purely mathematical point of view,
	\eqref{assf} does not exclude $f(h)\equiv0$ that, instead, has to be excluded in the 2D case \cite{simona,edofil,edofil2,denisgaldi}:
the reason is that the beam is fixed at its endpoints while in 2D the structure may be ``floating''.
	Due to the fluid flow, the beam is also subject to the lift force $L=L\big(y, h(y)\big)$; in fact, $L=L(y, u,p)$, see \eqref{lift},
	where $(u,p)$ solves \eqref{BVprobRR} in $\Omega_h$ and, hence, depends on $h$. As we shall see in Lemma \ref{lem-Lbound} and Proposition \ref{L-incre}, $L$ is bounded with respect to $y$ and Lipschitz continuous with respect to $h$.  Hence, after introducing the {\em potentials} $F: \R \rightarrow \R$ and $G : (-1,1)\times \R \to \R $ such that
	\begin{equation*}\label{F-G}
		F'(h)=f(h),\qquad G_h(y,h)=L(y,h ),
	\end{equation*}
	the overall (approximated) energy of the beam is
	\neweq{totalenergy}
	\int_{-1}^{1}\frac{h''(y)^{2}}2 \, dy+\int_{-1}^{1}F\big(h(y)\big)\, dy-\int_{-1}^{1}G\big(y, h(y)\big)\, dy\qquad\forall h\in\mathbb{H}\, .
	\endeq
	Under suitable assumptions on $G$ (satisfied in our context), this functional is well-defined over $\mathbb{H}$ and its
	critical points satisfy the Euler-Lagrange equation
	$$
	\int_{-1}^{1}\Big[h''(y)\phi''(y)+f\big(h(y)\big)\phi(y)-L\big(y, h(y)\big)\phi(y)\Big]\, dy=0\qquad\forall\phi\in\mathbb{H}
	$$
	and, in strong form,
	\begin{equation}\label{compa-beam}
		h''''(y)+f\big(h(y)\big)=L\big(y, h(y)\big)\qquad \text{for a.e. } y\in(-1,1)\, ,
	\end{equation}
	that coincides with \eqref{compatibility}.
	In Section \ref{sec_FSI}, we will prove that $h\in \mathbb{H}^4$ and that \eqref{compa-beam} is satisfied in strong form, see Proposition \ref{prop-exi-beam}. However, for the regularity of solutions to \eqref{BVprobRR}, it is sufficient to consider the larger space
\begin{equation*}
C_0^{1,1}([-1,1])=\left\{h\in C^{1,1}([-1,1])\mbox{ s.t. }h(\pm1)=0\right\}.
\end{equation*}

    Finally, the differential equation \eqref{compa-beam} has to be complemented with some boundary conditions,
	depending on how the beam is fixed at the endpoints. For a beam representing a bridge, there are two possibilities:\par
	$\bullet$ Clamped: mathematically, this means that $h(\pm1)=h'(\pm1)=0$ (homogeneous Dirichlet boundary conditions), which leads to the request that $h\in H^2_0(-1,1)$. In such case, the supremum in \eqref{eq_theta_*} disappears since $\theta(A)$ does not depend on $h$.\par
$\bullet$ Hinged: mathematically, this means that $h(\pm1)=h''(\pm1)=0$ (homogeneous Navier boundary conditions), which leads to the request that $h\in H^2\cap H^1_0(-1,1)$;
		in fact, in this space $h''(\pm1)$ may not be defined and this condition should be seen as a natural boundary condition. As just mentioned, in Section \ref{sec_FSI}, we will see that $h\in \mathbb{H}^4$, which implies that $h''(\pm1 )=0$.\par
These two boundary conditions are physically represented in Figure~\ref{beamfig}.
	
	\begin{figure}[h!]
		\begin{center}
			\resizebox{10cm}{!}{\includegraphics{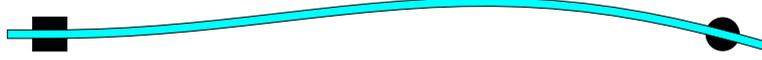}}
			\caption{The depicted boundary conditions for the endpoints is clamped (left) and hinged (right).}\label{beamfig}
		\end{center}
	\end{figure}

	\section{Fluid boundary-value problem}\label{fluid-section}

To simplify notations, we group the boundary conditions in \eqref{BVprobRR} by defining $u_*:\partial\Omega_h\to\mathbb{R}^3$ as
\begin{equation}\label{u*}
u_*=
\begin{cases}
0&\hspace{.3in}\mbox{on}\hspace{.1in}\partial\mathcal{B}^h\\
\gamma Ve_1&\hspace{.3in}\mbox{on}\hspace{.1in}\overline{\Gamma_{y}^\pm \setminus \mathcal{B}_{\pm 1}}\cup\Gamma_z^\pm\\
\gamma V_ie_1&\hspace{.3in}\mbox{on}\hspace{.1in}\Gamma_x^-\\
\gamma V_oe_1&\hspace{.3in}\mbox{on}\hspace{.1in}\Gamma_x^+.
\end{cases}
\end{equation}
Let us explain what is meant by solution to \eqref{BVprobRR}, that is,
\begin{equation}\label{eq_main}
\begin{aligned}
&-\eta\Delta u+u\cdot\nabla u+\nabla p=0,&\qquad&\nabla\cdot u=0&\qquad&\mbox{in}\hspace{.1in}\Omega_h\\
&u=u_*&\qquad& &\qquad&\mbox{on}\hspace{.1in}\partial\Omega_h.
\end{aligned}
\end{equation}
	
	\begin{definition}\label{def_1}
		We say that
		\begin{itemize}
			\item a pair $(u,p)\in H^1(\Omega_h)\times L^2(\Omega_h)$ is a weak solution to \eqref{eq_main} if $u$ satisfies the boundary condition \eqref{eq_main}$_2$ in the trace sense and for every $\psi\in H_0^1(\Omega_h)$ we have
			\begin{equation}
				\eta\int_{\Omega_h}\nabla u:\nabla\psi-\int_{\Omega_h}p\nabla\cdot\psi+\int_{\Omega_h}u\cdot\nabla u\cdot\psi=0;
			\end{equation}
			\item a pair $(u,p)\in W^{2,2+\sigma}(\Omega_h)\times W^{1,2+\sigma}(\Omega_h)$, for $\sigma$ as in \eqref{eq_sigma}, is a strong solution to \eqref{eq_main} if $u$ satisfies the boundary condition \eqref{eq_main}$_2$ pointwise and the equation \eqref{eq_main}$_1$ a.e. in $\Omega_h$.
		\end{itemize}
	\end{definition}
We show that, under suitable assumptions, \eqref{eq_main} admits a strong solution.
	
\begin{theorem}\label{the_ex_regu}Let $h\in C_0^{1,1}([-1,1])$ be small enough to ensure \eqref{eq_bound_infty}. For any $\gamma\geq0$, \eqref{eq_main} admits a weak solution $(u, p)$ in $\Omega_h$; there exists $\overline{\gamma}>0$ such that the weak solution is unique for $\gamma\in [0, \overline{\gamma})$. Moreover, any weak solution is a strong solution and there exists $C^*>0$, depending on $\sigma$ but not on $h$, such that
\begin{equation}\label{reg-est}
\left\|u\right\|_{W^{2,2+\sigma }(\Omega_h)}+\left\|p\right\|_{W^{1,2+\sigma}(\Omega_h)}\le C^*\gamma \qquad \forall  \gamma\in[0, \overline{\gamma}).
\end{equation}
\end{theorem}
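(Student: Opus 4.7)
I will first carry out the existence--uniqueness theory for weak solutions by the classical Leray--Hopf lift argument, and then upgrade the regularity by localizing on the different singular features of $\partial\Omega_h$. The final step, which I expect to be the main obstacle, is the treatment of the attachment curves $\partial\mathcal{B}_{\pm 1}\cap\Gamma_y^\pm$, where the choice of the non-Hilbertian integrability exponent $2+\sigma$ is essential.

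For the \emph{existence of a weak solution}, I would construct a solenoidal extension $\hat u$ of the boundary datum $u_*$ in \eqref{u*}. The compatibility $\int_{\Gamma_x^-}V_i=\int_{\Gamma_x^+}V_o=1$ in \eqref{flux} makes the net boundary flux vanish, so a Bogovskii plus Hopf-type construction yields $\hat u\in W^{2,2+\sigma}(\Omega_h)$ with $\|\hat u\|_{W^{2,2+\sigma}}\le C\gamma$ and, by concentrating the support in a thin boundary layer, with $\|\hat u\|_{L^\infty}$ of that layer arbitrarily small. Writing $u=\hat u+v$ with $v\in H^1_0(\Omega_h)$ divergence-free, \eqref{eq_main} becomes a perturbed Navier--Stokes problem whose solvability follows from Galerkin or Leray--Schauder; the Hopf trick absorbs $\int v\cdot\nabla\hat u\cdot v$ into the viscous term and simultaneously gives the a priori bound $\|u\|_{H^1}\le C\gamma$. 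For \emph{uniqueness} with $\gamma<\overline{\gamma}$, if $u_1,u_2$ are two weak solutions then $w=u_1-u_2\in H^1_0(\Omega_h)$ satisfies
\[
\eta\,\|\nabla w\|_{L^2}^2=-\int_{\Omega_h} w\cdot\nabla u_1\cdot w,
\]
and Ladyzhenskaya's inequality together with the a priori bound forces $w=0$ as soon as $\overline{\gamma}$ is small enough.

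For \emph{strong regularity}, I would localize with a partition of unity subordinate to a finite cover of $\overline{\Omega_h}$ distinguishing: (i) the interior and the relative interiors of the faces of $P$ together with the smooth surface $\mathcal{S}$; (ii) the right-angle edges and vertices of $P$; (iii) the attachment curves $\partial\mathcal{B}_{\pm1}\cap\Gamma_y^\pm$. In zone (i), standard Cattabriga-type $L^{2+\sigma}$ estimates for the Stokes system give $W^{2,2+\sigma}$ regularity from a source $u\cdot\nabla u\in L^{2+\sigma}$, which follows from a short bootstrap: $u\in H^1\hookrightarrow L^6$ yields $u\cdot\nabla u\in L^{3/2}$, hence $u\in W^{2,3/2}\hookrightarrow W^{1,3}$, and one further iteration places $u\cdot\nabla u$ in $L^q$ for every admissible $q$. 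Zone (ii) is dealt with by reflection: since $V$ vanishes on the boundary of the square $(-1,1)^2$ and $u$ vanishes on $\partial\mathcal{B}^h$, extending $(u,p)$ by a suitable mirror construction across a flat face of $P$ preserves both the Navier--Stokes system and the boundary conditions, turning those right angles into smooth points of an enlarged Lipschitz domain and reducing zone (ii) to zone (i).

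The \emph{main obstacle} is zone (iii). Locally around an attachment curve, $\Omega_h$ is modelled by a two-dimensional wedge of opening $\pi-\theta(A)$ in the plane $\mathcal{P}_\alpha$ times a smooth one-dimensional transverse direction, i.e.\ a dihedral edge. Since $\theta_*$ may exceed $\pi/2$, Grisvard's theory precludes $H^2$-regularity in the Hilbertian setting; the remedy, following \cite{Mazya-Rossmann2010}, is to work in $L^p$-based, non-weighted spaces with $p$ tuned to the wedge opening. Condition \eqref{eq_sigma} on $\sigma$ is precisely the statement that the Mellin-type singular exponents of the Stokes operator on a dihedral of opening at most $\pi-\theta_*$ stay outside the critical strip associated with $W^{2,2+\sigma}$, which gives the required $L^{2+\sigma}$ elliptic estimate locally around the attachment curves. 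Patching the three zones via the partition of unity, absorbing the nonlinear term by the smallness of $\gamma$, and noting that the Lipschitz constant of $\Omega_h$ and the angle bound $\theta_*$ are uniform in $h$ by \eqref{eq_bound_infty} and \eqref{eq_theta_*}, yields \eqref{reg-est} with a constant $C^*$ independent of $h$.
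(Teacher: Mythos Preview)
Your overall architecture coincides with the paper's: Leray--Hopf extension plus the standard smallness argument for existence and uniqueness, reflection across the flat faces of $P$ to dispose of the right-angle edges and vertices, and the Maz'ya--Rossmann $L^{2+\sigma}$ dihedral theory for the attachment curves $\partial\mathcal{B}_{\pm1}$, with \eqref{eq_sigma} as the admissibility condition on $\sigma$.

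Two points in your write-up are inaccurate and would derail the actual computation. First, the dihedral opening of the \emph{fluid} domain at $A\in\partial\mathcal{B}_{\pm1}$ is $\theta(A)$, not $\pi-\theta(A)$: the paper's pencil eigenvalue is $\mu_+(A)=\pi/\theta(A)$, so the worst edge is the one with the \emph{largest} $\theta$, namely $\theta_*$, and \eqref{eq_sigma} is precisely the inequality $2-\pi/\theta_*<2/(2+\sigma)$. With your opening $\pi-\theta(A)$ the critical exponent would be $\pi/(\pi-\theta(A))$, the worst case would occur for \emph{small} $\theta$, and \eqref{eq_sigma} would not be the right constraint. Second, $V$ is only required to vanish on three sides of $[-1,1]^2$ (at $z=-1$ and $y=\pm1$), not at $z=1$; on $\Gamma_z^+$ the datum is $\gamma V e_1$ with $V\not\equiv0$, so only the normal component $u_3$ vanishes there. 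The paper therefore reflects across $z=1$ by extending $u_3$ oddly and $u_1,u_2,p$ evenly, which still preserves the Navier--Stokes system; your blanket ``mirror construction'' needs this refinement. Note also that the faces $x=\pm R$ cannot be reflected at all (the normal component $u_1$ does not vanish), but after the $y$- and $z$-reflections the edges lying on $\{x=\pm R\}$ sit inside flat portions of the extended boundary, where ordinary up-to-the-boundary regularity suffices.
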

\begin{proof}
We preliminarily observe that $\partial\mathcal{B}^h=\mathcal{B}_{-1}\cup \mathcal{B}_1\cup\mathcal{S}^h$, where $\mathcal{S}^h$ is the translation of $\mathcal{S}$ through the function $h$ and, hence, it is of class $C^{1,1}$.

\paragraph{\emph{Existence and uniqueness of weak solutions}} Since $\Omega_h$ is a Lipschitz domain with $\partial\Omega_h$ having only one connected component and since
\begin{equation*}
\int_{\partial\Omega_h}u_*\cdot n =0,
\end{equation*}
by \cite{LE1,Hopf1941} for every $\varepsilon>0$ there exists a Leray-Hopf solenoidal extension $s\in H^1(\Omega_h)$ of the boundary datum $u_*$ satisfying
\begin{equation*}
-\int_{\Omega_h}(v\cdot\nabla s)v\le\varepsilon\left\|\nabla v\right\|_{L^2(\Omega_h)}^2\hspace{.3in}\forall v\in H_0^1(\Omega_h).
\end{equation*}
Letting $v=u-s$, we recast \eqref{eq_main} into
\begin{equation}\label{eq_main2}
\begin{aligned}
&-\eta\Delta v+\nabla p+v\cdot\nabla v=f,&\qquad&\nabla\cdot v=0&\qquad&\mbox{in}\hspace{.1in}\Omega_h\\
&v=0&\qquad&&\qquad&\mbox{on}\hspace{.1in}\partial\Omega_h
\end{aligned}
\end{equation}
where $f=\eta\Delta s-s\cdot\nabla v-v\cdot\nabla s-s\cdot\nabla s$. A definition of weak and strong solution to \eqref{eq_main2} follows from Definition \ref{def_1}, the only difference being that the right-hand side is now given by the $L^2$-inner product between $f$ and every $\psi\in H_0^1(\Omega_h)$. Let $V(\Omega_h)=\left\{\varphi\in H_0^1(\Omega_h):\,\nabla\cdot\varphi=0\text{ a.e. in }\Omega_h\right\}$. From \cite[Lemma IX.1.2]{GA1} it follows that $v$ is a weak solution to \eqref{eq_main2} if and only if for every $\varphi\in V(\Omega_h)$ we have
\begin{equation}\label{eq_12}
\eta\int_{\Omega_h}\nabla v:\nabla\varphi+\int_{\Omega_h}v\cdot\nabla v\cdot\varphi=\left\langle f,\varphi\right\rangle.
\end{equation}
where $\left\langle\cdot,\cdot\right\rangle$ represents the duality paring in $H^{-1}(\Omega_h)$. From \cite[Theorem IX.4.1]{GA1} we know that there exists $v\in V(\Omega_h)$ satisfying \eqref{eq_12} which is equivalent to say that $u=v+s\in H^1(\Omega_h)$, together with the associated pressure $p\in L^2(\Omega_h)$ (which is chosen with zero mean over $\Omega_h$), is a weak solution to \eqref{eq_main}. In addition, from \cite[Theorem IX.4.1]{GA1} it follows that
\begin{equation}\label{eq_21}
\left\|u\right\|_{H^1(\Omega_h)}\le C\left[\left(1+\frac{1}{\eta}\right)\left\|s\right\|_{H^1(\Omega_h)}+\frac{1}{\eta}\left\|s\right\|_{H^1(\Omega_h)}^2\right]\le C\left(\gamma+\gamma^2\right).
\end{equation}
where $C>0$ does not depend on $h$ due to the smallness assumption on  its $C^{1,1}$-norm.

Next, we prove that the weak solution $u\in H^1(\Omega_h)$ constructed above is unique for sufficiently small $\gamma\ge0$. Let $u_1,u_2\in H^1(\Omega_h)$ be any two weak solutions to \eqref{eq_main}, and define $w=u_1-u_2$.
Then $w$ solves
\begin{equation}\label{eq_20}
\eta\int_{\Omega_h}\nabla w:\nabla\varphi+\int_{\Omega_h}w\cdot\nabla w\cdot\varphi=-\int_{\Omega_h}w\cdot\nabla u_2\cdot\varphi+u_2\cdot\nabla w\cdot\varphi
\end{equation}
for every $\varphi\in V(\Omega_h)$. Hence, choosing $\varphi=w$ in \eqref{eq_20}, and using H\"older and Sobolev inequality, we get
\begin{align}\label{eq_22}
\eta\left\|\nabla w\right\|_{L^2(\Omega_h)}^2&=-\int_{\Omega_h}w\cdot\nabla u_2\cdot w\le\left\|\nabla u_2\right\|_{L^2(\Omega_h)}\left\|w\right\|_{L^4(\Omega_h)}^2 \notag\\
&\le C\left\|\nabla u_2\right\|_{L^2(\Omega_h)}\left\|\nabla w\right\|_{L^2(\Omega_h)}^2.
\end{align}
Since $u_2$ is a weak solution to \eqref{eq_main}, injecting \eqref{eq_21} into \eqref{eq_22}, we get
\begin{equation}\label{eq_23}
\eta\left\|\nabla w\right\|_{L^2(\Omega_h)}^2\le C\left(\gamma+\gamma^2\right)\left\|\nabla w\right\|_{L^2(\Omega_h)}^2.
\end{equation}
Taking $\gamma>0$ small enough such that
\begin{equation*}
C\left(\gamma+\gamma^2\right)<\eta
\end{equation*}
from \eqref{eq_23} we arrive at $\left\|\nabla w\right\|_{L^2(\Omega_h)}=0$, and since $w=0$ on $\partial\Omega_h$ we conclude that $w\equiv0$, i.e., $u_1=u_2$ in $\Omega_h$.

\paragraph{\emph{Regularity}}

The boundary $\partial\Omega_h$ is locally of class $C^{1,1}$ except at the vertices and the edges, including the closed curves $\partial \mathcal{B}_{\pm 1}^h$. Therefore, we need
to analyze only the behavior of a solution $(u,p)$ close to any of the three just mentioned kinds of boundary points.\par
We start with the analysis of the regularity of $(u,p)$ close to the points belonging to the closed edges of $P$, thereby including the eight
vertices.
Consider the parallelepiped $P^l=(-R,R)\times[1,3)\times(-1,1)$ and the reflection $\mathcal{B}^{h,l}$ of $\mathcal{B}^h$ with respect to the
plane $y=1$, and let $\Omega_h^l=P^l\setminus\mathcal{B}^{h,l}$. If we define $u^l:\Omega_h^l\to\mathbb{R}^3$ and $p^l:\Omega_h^l\to\mathbb{R}$ as
\begin{align*}
u^l(x,y,z)&=\big(u^l_1(x,y,z),u^l_2(x,y,z),u^l_3(x,y,z)\big)\\
&=\big(u_1(x,2-y,z),-u_2(x,2-y,z),u_3(x,2-y,z)\big)\\
p^l(x,y,z)&=p(x,2-y,z),
\end{align*}
then $(u^l,p^l)$ solves
\begin{align*}
-\eta\Delta u^l+\nabla p^l+u^l\cdot\nabla u^l=0,\hspace{.3in}\nabla\cdot u^l=0&\hspace{.3in}\mbox{in}\hspace{.1in}\Omega_h^l.
\end{align*}
Hence, the pair $(\overline{u},\overline{p})$ defined as
\begin{equation*}
(\overline{u},\overline{p})=
\begin{cases}
(u,p)&\hspace{.3in}\mbox{in}\hspace{.1in}\Omega_h\\
(u^l,p^l)&\hspace{.3in}\mbox{in}\hspace{.1in}\Omega_h^l
\end{cases}
\end{equation*}
is smooth across $\Gamma^+_y$ except at $\partial\mathcal{B}^h_1$ and solves
\begin{equation*}
-\eta\Delta\overline{u}+\nabla\overline{p}+\overline{u}\cdot\nabla\overline{u}=0,\hspace{.3in}\nabla\cdot\overline{u}=0\hspace{.3in}
\mbox{in}\hspace{.1in}\Omega_h\cup\Omega_h^l.
\end{equation*}
Thanks to this reflection, all the points of $\Gamma^+_y\setminus\partial\mathcal{B}^h_1$ are interior points of $\Omega_h\cup\Omega_h^l$.
In a similar manner, we introduce reflections with respect to the planes $y=-1$ and $z=-1$ over which homogeneous Dirichlet boundary
conditions for $u$ are satisfied. We then find a pair $(\tilde{u},\tilde{p})$, defined in $\tilde{\Omega}_h=(-R,R)\times(-3,3)\times(-3,1)\setminus\{\mathcal{B}^h \mbox{ and its reflections}\}$, solving
\begin{equation*}
-\eta\Delta\tilde{u}+\nabla\tilde{p}+\tilde{u}\cdot\nabla\tilde{u}=0,\hspace{.3in}\nabla\cdot\tilde{u}=0\hspace{.3in}\mbox{in}\hspace{.1in}\tilde{\Omega}_h
\end{equation*}
complemented with reflected boundary conditions that we do not need to make fully explicit.
We then reflect $\tilde{\Omega}_h$ with respect to the plane
$z=1$, which is slightly more delicate because of the inhomogeneous Dirichlet condition. However, we have $u_3=0$ on $\Gamma^+_z$ which allows an odd reflection for this (normal) component; the tangential components $u_1$ and $u_2$ may be evenly reflected without altering their boundary regularity.
Defining the so-obtained domain $\hat{\Omega}_h\supsetneqq \tilde{\Omega}_h$, the points in the edges of $\partial P$ belong either to the interior of $\hat{\Omega}_h$ or to flat regions of $\partial \hat{\Omega}_h$, where standard elliptic regularity theory applies (interior or up to the boundary, see \cite{GA1}).
Hence,
$(u,p)\in W^{2,2+\sigma}(U)\times W^{1,2+\sigma}(U)$ in a neighborhood $U$ of any edge of $\partial P$ and, in view of \eqref{u*}, the following estimate holds
\begin{equation}\label{reg-est-vertices}
\left\|u\right\|_{W^{2,2+\sigma}(U)}+\left\|p\right\|_{W^{1,2+\sigma}(U)}\le C\left\|u_*\right\|_{W^{2-\frac{1}{2+\sigma},2+\sigma}(\partial\Omega_h)}\leq C \lambda
\end{equation}for some $C>0$ independent of $h$. We point out that, due to the absence of an odd reflection of $u_1$ (the normal velocity component), the reflection technique {\em does not} apply
to the faces $x=\pm R$.\par
We then focus on the boundary points of the planar curve $\partial\mathcal{B}_1^h$ (see Figure \ref{angle}), and one could argue analogously for the points of $\partial\mathcal{B}_{-1}^h$. The analysis can be performed using the theory developed in \cite{Mazya-Rossmann2010} but, since we were unable to find therein the exact statement that we need, we briefly sketch how to combine the tools of this theory to reach regularity and related
bounds.\par
Let $\mathcal{G}\subset \Omega_h$ be a neighborhood of $\partial\mathcal{B}_1^h$ with a smooth boundary far away from it.
Consider a smooth cut-off function  $\chi:\Omega_h \to [0,1]$ with ${\rm supp}(\chi)$ strictly contained in $\mathcal{G}$. Then,
$(\underline{u}, \underline{p})=(\chi u, \chi p) \in H^{1}(\mathcal{G})\times L^{2}(\mathcal{G})$ satisfies

\begin{equation}\label{eq_30}
	\begin{aligned}
		&-\eta\Delta \underline{u}+\underline{u}\cdot\nabla\underline{u}+\nabla \underline{p}=\underline{f},&\qquad&\nabla\cdot \underline{u} =\underline{g}&\qquad&\mbox{in}\hspace{.1in}\mathcal{G}\\[3pt]
		&\underline{u}=0&\qquad&&\qquad&\mbox{on}\hspace{.1in}\partial \mathcal{G}
	\end{aligned}
\end{equation}
with \begin{equation*}
\underline{f} =-\eta u\Delta\chi -2 \eta\nabla\chi\cdot \nabla u + p\nabla\chi  -\chi u\cdot\nabla u +\chi^2u\cdot\nabla u+\chi u(u\cdot\nabla\chi), \quad \underline{g}=u\cdot\nabla \chi.
\end{equation*}

Since  $\mathcal{G}$ is a polyhedral-type domain in the sense of \cite[Section 4.1.1]{Mazya-Rossmann2010} and $(\underline{f},\underline{g})\in L^{3/2}(\mathcal{G})\times H^1(\mathcal{G})$, we fall in the geometric and functional setting of \cite[Theorem 11.2.8]{Mazya-Rossmann2010}. For any  $A\in\partial\mathcal{B}_1^h$, let $\mathcal{D}_A$ be the dihedron whose faces are respectively contained in the plane $\{y=1\}$ and the plane $\Lambda$ perpendicular to $\mathcal{P}_\alpha$ and tangent to $\partial\mathcal{B}^h$ at $A$. The regularity theory for \eqref{eq_30} is based on the study of the \emph{operator pencil} $\mathcal{A}_A$ associated with the model problem in $\mathcal{D}_A$ for any $A\in \partial \mathcal{B}_1^h$;
 see \cite[Section 2.3.1]{Mazya-Rossmann2010}. Roughly speaking, $\mathcal{A}_A$ describes the behavior of the elliptic operator related to \eqref{eq_30} around $A$.  Let
  $\mu_+(A)$ be the largest positive number such that the strip
 \begin{equation*}
0<{\rm Re}\ \lambda<\mu_+(A)
\end{equation*}
contains only the eigenvalue $\lambda=1$ of $\mathcal{A}_A$.  According to \cite[Section 11.1.2]{Mazya-Rossmann2010}, in the case of Dirichlet boundary conditions, we have $\mu_+(A)= \pi/\theta(A)$. Moreover, due to \eqref{eq_theta_*},
\begin{equation}\label{mu+}
\overline{\mu}_+=\min_{A\in\partial\mathcal{B}_1^h}\mu_+(A)\geq \frac{\pi}{\theta_*}>1.
\end{equation}
 Then, \cite[Theorem 11.2.8]{Mazya-Rossmann2010} holds with $\beta=0$ and $\delta=0$ whenever
\begin{equation}\label{eq_1}
\max\left\{0,2-\overline{\mu}_+\right\}<\frac{2}{s}< 2.
\end{equation}
 We apply it twice: first with $s=3/2$, yielding $(\underline{u},\underline{p})\in W^{2,3/2}(\mathcal{G})\times W^{1, 3/2}(\mathcal{G})$ and, thereby,  $(\underline{f},\underline{g})\in L^{2+\sigma}(\mathcal{G})\times W^{1, 2+\sigma}(\mathcal{G})$; second with $s=2+\sigma$ for $\sigma$ satisfying \eqref{eq_sigma}, concluding that $(\underline{u},\underline{p})\in W^{2,2+\sigma}(\mathcal{G})\times W^{1,2+\sigma}(\mathcal{G})$. We stress that this result holds in the mentioned non-weighted spaces after employing suitable embeddings for $s<3$. \par
Since the corresponding regularity estimate does not appear explicitly in \cite[Theorem 11.2.8]{Mazya-Rossmann2010}, we describe here the main steps to achieve it.

We fix $A\in \partial\mathcal{B}_1^h$ and localize \eqref{eq_30} in a neighborhood $\mathcal{U}\subset \mathcal{G}$ of $A$ by introducing another cut-off function. We consider a suitable $C^{1,1}$ diffeomorphism $\kappa: \mathcal{D}_A\rightarrow \kappa(\mathcal{D}_A)\subset \mathbb{R}^3$ such that  $\kappa^{-1}(\mathcal{U})\subset\kappa^{-1}(\mathcal{G})$, $\kappa^{-1}(\Gamma_{y}^+\cap \mathcal{U}) \subset\{y=1\}$ and $\kappa^{-1}(\partial\mathcal{B}^h \cap  \mathcal{U})\subset\Lambda$, see \cite[Proposition 6.2]{BocCasGan} for the details in the 2D setting.
Then, the pair $(\hat{u}, \hat{p})=(\underline{u}, \underline{p})\circ \kappa \in W^{2, 2+\sigma}(\kappa^{-1}(\mathcal{U}))\times W^{1, 2+\sigma}(\kappa^{-1}(\mathcal{U}))$ solves a transformed Navier-Stokes problem (similar to \eqref{BVprob_trans} below) that can be reduced to the Stokes-type problem
\begin{equation}\label{eq_3}
\begin{aligned}
&-\eta\Delta \hat{u}+\nabla \hat{p}=\hat{f},&\qquad&\nabla\cdot \hat{u}=\hat{g}&\qquad&\mbox{in}\hspace{.1in}\kappa^{-1}(\mathcal{U})\\
&\hat{u}=0&\qquad&&\qquad&\mbox{on}\hspace{.1in}\partial\kappa^{-1}(\mathcal{U})
\end{aligned}
\end{equation}
with a suitable $\hat{f}\in L^{2+\sigma}(\kappa^{-1}(\mathcal{U}))$ and $\hat{g}\in W^{1,2+\sigma}(\kappa^{-1}(\mathcal{U}))$. We artificially construct three polyhedral cones with vertex $A$ as follows. Take the line $r_A$ containing $A$ and orthogonal to the plane $\{y=1\}$. The plane $\mathcal{P}_\alpha$ in \eqref{Palpha} contains $r_A$ and hence is also orthogonal to $\{y=1\}$. Consider the planes  $\mathcal{P}_\alpha^\pm$ obtained by rotating $\mathcal{P}_\alpha$ around $r_A$ of angles of measures $\pm \pi/4$. These planes allow us to construct three polyhedral cones: the first two delimited by $\{y=1\}$, $\Lambda$, $\mathcal{P}_\alpha$ and the third by $\{y=1\}$ and $\mathcal{P}_\alpha^\pm$.
Through a smooth partition of unity we obtain three Stokes-type problems with homogeneous Dirichlet conditions as \eqref{eq_3}, one in each polyhedral cone. \par Our next goal is to apply \cite[Theorem 10.5.4]{Mazya-Rossmann2010} to these problems. In this scenario, the regularity theory is based on the study of the operator pencils related to the vertex $A$ and related to the edges $M_k$ ($k=1,2,3$) of each cone.
On the one hand, since each cone is contained in a half-space, \cite[Section 11.3.1]{Mazya-Rossmann2010} guarantees that the hypothesis of the theorem on the eigenvalues of the operator pencils at $A$ holds for $s=2+\sigma$ and $\beta=0$.
On the other hand, the construction of the cones ensures that the $\overline{\mu}^k_+$'s, defined as in \eqref{mu+} and corresponding to the $M_k$'s satisfy \eqref{eq_1} with $s=2+\sigma$. Through a further localizing cut-off in $\mathcal{D}_A$, we may then  apply \cite[Theorem 10.5.4]{Mazya-Rossmann2010} with $s=2+\sigma$, $\beta=0$, $\delta=0 $ and, thanks to the above partition of unity,  we obtain
\begin{equation*}
\left\|\hat{u}\right\|_{W^{2,2+\sigma}(\kappa^{-1}(\mathcal{U}))}+\left\|\hat{p}\right\|_{W^{1,2+\sigma}(\kappa^{-1}(\mathcal{U}))}\le C\left(\|\hat{f}\|_{L^{2+\sigma}(\kappa^{-1}(\mathcal{U}))}+\left\|\hat{g}\right\|_{W^{1,2+\sigma}(\kappa^{-1}(\mathcal{U}))}\right)
\end{equation*}
which, in turn, by letting $A$ vary and by compactness of $\partial\mathcal{B}_1^h$, implies
\begin{equation*}
\left\|\underline{u}\right\|_{W^{2,2+\sigma}(\mathcal{G})}+\left\|\underline{p}\right\|_{W^{1,2+\sigma}(\mathcal{G})}\le C\left(\|\underline{f}\|_{L^{2+\sigma}(\mathcal{G})}+\left\|\underline{g}\right\|_{W^{1,2+\sigma}(\mathcal{G})}\right)\leq C \gamma  \quad \forall  \gamma\in[0, \overline{\gamma}),
\end{equation*} where the last inequality is obtained after bootstrapping and by \eqref{u*}. Similarly, we get the analogous bound in a neighborhood of $\partial\mathcal{B}_{-1}^h$. Finally, after collecting terms, we combine the obtained estimates with \eqref{reg-est-vertices} and the bounds up to $\mathcal{S}^h$ to infer that there exists $C^*>0$, independent of $h$ (due to the smallness assumption on its $C^{1,1}$-norm), such that
\begin{equation*}
\left\|u\right\|_{W^{2,2+\sigma }(\Omega_h)}+\left\|p\right\|_{W^{1,2+\sigma}(\Omega_h)}\le C^*\gamma \qquad \forall  \gamma\in[0, \overline{\gamma}).
\end{equation*}
The proof is so complete.
\end{proof}

\section{Fluid-structure interaction problem}\label{sec_FSI}

Aiming to study the equilibrium configurations for the FSI problem \eqref{BVprobRR} and \eqref{compa-beam}, we let the domain $\mathcal{B}^h$ vary.
     For any $y\in(-1,1)$, the cross-section $\mathcal{B}_y^h$ has a barycenter with vertical displacement $h(y)$, which is now unknown.  \par
     The FSI problem we are led to study consists in the coupled PDE-ODE system
     \begin{equation}\label{BVprobR}
		\begin{aligned}
			&-\eta\Delta u+u\cdot\nabla u+\nabla p=0,\quad\nabla\cdot u=0 \quad \text{in}\quad \Omega_h=P\setminus \mathcal{B}^h\\[2pt]
			&u=0 \quad \text{on} \quad  \partial \mathcal{B}^h ,\quad u=\gamma Ve_1\quad \text{on} \quad \overline{\Gamma_{y}^\pm \setminus \mathcal{B}_{\pm 1}}\cup\Gamma_z^\pm\\[2pt]
			&u= \gamma V_ie_1\quad \text{on} \quad \Gamma_x^-,\quad u= \gamma V_oe_1\quad\text{on} \quad \Gamma_x^+.
		\end{aligned}
	\end{equation}

    \begin{equation}\label{beam-pb}
		\begin{aligned}
			&h''''+ f(h)= L(\cdot, h)  \qquad \text{in} \quad (-1,1), \\[2pt]
			&h(\pm 1) = 0 , \qquad h'(\pm 1)=0 \ \text{ or } \ h''(\pm 1)=0,
		\end{aligned}
	\end{equation} where
    \begin{equation}\label{load}
        L(y, h)=L(y, u(h), p(h)) = - e_3 \cdot \int_{\partial \mathcal{B}_y^h}\mathbb{T}(u(h), p(h)) n
    \end{equation} with $\mathbb{T}(\cdot, \cdot)$ as in \eqref{lift}.
Let us state the main result on the equilibrium configurations of the fluid-beam interaction problem \eqref{BVprobR}-\eqref{beam-pb}. To this end, we introduce the open ball \begin{equation}\label{ballH}\mathbb{B}_\delta= \{ h \in \mathbb{H}^4 \ \text{s.t.} \ \|h\|_{\mathbb{H}^4} < \delta\}\\[5pt]\end{equation} and we prove the following:
\begin{theorem}\label{main-theo}
There exists $\beta_0>0$ and $\gamma_0\in (0, \overline{\gamma}]$, with $\overline{\gamma}$ as in Theorem \ref{the_ex_regu},  such that for any $\gamma \in [0, \gamma_0)$, the FSI problem \eqref{BVprobR}-\eqref{beam-pb} admits a unique solution $$(h, u, p)\in \mathbb{B}_{\beta_0}\times W^{2, 2+\sigma}(\Omega_h)\times W^{1, 2+\sigma}(\Omega_h).$$In addition, there exists $\gamma^*\in (0, \gamma_0]$ such that the map $\gamma\mapsto (h(\gamma), u(\gamma), p(\gamma))$ is Lipschitz continuous on the interval
$[0, \gamma^*)$. More precisely, for any $\gamma_1, \gamma_2 \in [0, \gamma^*)$,
\begin{equation*}\begin{aligned}
    \|h(\gamma_1) - h(\gamma_2)\|&_{\mathbb{H}^4} + \|u(\gamma_1) - u(\gamma_2)\circ \varphi_{h(\gamma_2)-h(\gamma_1)}\|_{W^{2, 2+\sigma}(\Omega_{h(\gamma_1)})} \\[5pt]&+ \|p(\gamma_1) - p(\gamma_2)\circ \varphi_{h(\gamma_2)-h(\gamma_1)}\|_{W^{1, 2+\sigma}(\Omega_{h(\gamma_1)})}\leq C(\beta_0) |\gamma_1-\gamma_2|,
    \end{aligned}
\end{equation*}
where the diffeomorphism $\varphi_h$ is defined in Lemma \ref{lemma-diffeo} below.

\end{theorem}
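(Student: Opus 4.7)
The plan is to realise the triple $(h,u,p)$ as a fixed point of the map $\Phi:\overline{h}\mapsto h$, where $h$ is obtained by first solving \eqref{BVprobR} on $\Omega_{\overline{h}}$ via Theorem \ref{the_ex_regu} to produce $(u(\overline{h}),p(\overline{h}))$, and then solving \eqref{beam-pb} with load $L(\cdot,\overline{h})$ given by \eqref{load}. The Banach contraction theorem applied on a small ball $\mathbb{B}_{\beta_0}\subset\mathbb{H}^4$ will deliver both existence and uniqueness. Lipschitz continuity in $\gamma$ will come from a parallel linearisation, with the added difficulty that distinct values of $\gamma$ produce distinct fluid domains $\Omega_{h(\gamma)}$ and the fluid fields must be compared through the diffeomorphism $\varphi_{h(\gamma_2)-h(\gamma_1)}$ of Lemma \ref{lemma-diffeo}.

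To check that $\Phi$ is well-defined and self-maps a small ball, take $\overline{h}\in\mathbb{B}_\delta$ with $\delta$ small. The embedding $\mathbb{H}^4\hookrightarrow C_0^{1,1}([-1,1])$ together with \eqref{SH-embedding} and \eqref{eq_bound_infty} places us in the hypotheses of Theorem \ref{the_ex_regu}, so $(u(\overline{h}),p(\overline{h}))$ exists, is unique for $\gamma<\overline{\gamma}$, and obeys the bound \eqref{reg-est}. The $W^{2,2+\sigma}\times W^{1,2+\sigma}$ regularity, together with the choice of $\sigma$ in \eqref{eq_sigma}, guarantees that the integrand of \eqref{load} has a well-defined trace on each 1-D curve $\partial\mathcal{B}^{\overline{h}}_y$; Lemma \ref{lem-Lbound} then yields $L(\cdot,\overline{h})\in L^2(-1,1)$ with $\|L(\cdot,\overline{h})\|_{L^2}\leq C\gamma$. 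Since $h\mapsto h''''$ is an isomorphism $\mathbb{H}^4\to L^2(-1,1)$ and $f$ is Lipschitz with $f(0)=0$, a standard monotone-operator or contraction argument produces a unique solution to $h''''+f(h)=L(\cdot,\overline{h})$ in $\mathbb{H}^4$ with $\|h\|_{\mathbb{H}^4}\leq C\gamma$. Setting $\beta_0:=C\gamma_0$ gives $\Phi(\mathbb{B}_{\beta_0})\subset\mathbb{B}_{\beta_0}$.

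For contractivity, given $\overline{h}_1,\overline{h}_2\in\mathbb{B}_{\beta_0}$ and $h_i=\Phi(\overline{h}_i)$, I subtract the two beam equations, test against $(h_1-h_2)''''$, and absorb the Lipschitz-$f$ term by Poincaré to get $\|h_1-h_2\|_{\mathbb{H}^4}\lesssim \|L(\cdot,\overline{h}_1)-L(\cdot,\overline{h}_2)\|_{L^2}$. The Lipschitz estimate of Proposition \ref{L-incre}, namely $\|L(\cdot,\overline{h}_1)-L(\cdot,\overline{h}_2)\|_{L^2}\leq C\gamma\|\overline{h}_1-\overline{h}_2\|_{\mathbb{H}^4}$, then yields $\|\Phi(\overline{h}_1)-\Phi(\overline{h}_2)\|_{\mathbb{H}^4}\leq C\gamma\|\overline{h}_1-\overline{h}_2\|_{\mathbb{H}^4}$. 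Choosing $\gamma_0$ so that $C\gamma_0<1$ makes $\Phi$ a strict contraction, and Banach's theorem produces the unique fixed point $h\in\mathbb{B}_{\beta_0}$ together with the associated $(u,p)=(u(h),p(h))$.

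To prove the Lipschitz dependence on $\gamma$, I fix $\gamma_1,\gamma_2\in[0,\gamma^*)$ and pull the fluid fields back to the common domain $\Omega_{h(\gamma_1)}$ through $\varphi=\varphi_{h(\gamma_2)-h(\gamma_1)}$. The transported pair $(\tilde u_2,\tilde p_2)=(u(\gamma_2),p(\gamma_2))\circ\varphi$ satisfies a perturbed Navier--Stokes system whose coefficients differ from the identity by terms controlled by $\|h(\gamma_1)-h(\gamma_2)\|_{\mathbb{H}^4}$. Subtracting from \eqref{BVprobR} with $\gamma=\gamma_1$ produces a linearised Navier--Stokes problem in $\Omega_{h(\gamma_1)}$ whose forcing splits into a contribution of size $|\gamma_1-\gamma_2|$ coming from the boundary data $\gamma V,\gamma V_{i,o}$ and a contribution proportional to $\|h(\gamma_1)-h(\gamma_2)\|_{\mathbb{H}^4}$ coming from the coefficient perturbation; the strong $W^{2,2+\sigma}\times W^{1,2+\sigma}$ estimates from Theorem \ref{the_ex_regu} then bound the fluid difference by $C(|\gamma_1-\gamma_2|+\|h(\gamma_1)-h(\gamma_2)\|_{\mathbb{H}^4})$. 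Feeding this into the beam estimate, using once more Proposition \ref{L-incre}, and absorbing the $h$-term for $\gamma^*$ small enough closes the loop and yields the announced inequality. I expect the genuine obstacle to sit precisely at this last step: showing that the diffeomorphism $\varphi$ really transfers the regularity estimate \eqref{reg-est} into a Lipschitz bound with constants uniform in $\beta_0$, and that the linearised lift remains Lipschitz in the right norm, since $L$ depends on 1-D traces of $\nabla u$ and $p$ on curves that themselves move with $h$, which is exactly what the choice of $\sigma$ in \eqref{eq_sigma} was designed to make tractable.
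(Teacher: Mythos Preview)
Your strategy is the same as the paper's: build the map $\overline h\mapsto h$ by composing Theorem~\ref{the_ex_regu} with Proposition~\ref{prop-exi-beam}, show it is a self-map and a contraction on $\mathbb{B}_{\beta_0}$ via Proposition~\ref{L-incre}, and then obtain Lipschitz dependence on $\gamma$ by pulling back through $\varphi_{h(\gamma_2)-h(\gamma_1)}$, estimating the resulting perturbed Stokes problem, and absorbing the $h$-difference for $\gamma^*$ small.

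There is one technical slip in your contraction step. Testing the difference of the beam equations against $(h_1-h_2)''''$ and using Poincar\'e yields at best
\[
\bigl(1-[f]_{\mathrm{Lip}}\,C_P\bigr)\,\|(h_1-h_2)''''\|_{L^2}\le \|L(\cdot,\overline h_1)-L(\cdot,\overline h_2)\|_{L^2},
\]
and nothing in \eqref{assf} forces $[f]_{\mathrm{Lip}}\,C_P<1$; the Lipschitz constant of $f$ may be large. The paper sidesteps this by a two-step argument: first test against $h_1-h_2$ itself, so that the monotonicity of $f$ in \eqref{assf} gives the cross term $\int(f(h_1)-f(h_2))(h_1-h_2)$ a good sign and one obtains $\|h_1-h_2\|_{\mathbb{H}}\le C\|L(\cdot,\overline h_1)-L(\cdot,\overline h_2)\|_{L^\infty}$ with no constraint on $[f]_{\mathrm{Lip}}$; then read $(h_1-h_2)''''$ directly from the equation and bound $\|f(h_1)-f(h_2)\|_{L^2}$ by the $\mathbb{H}$-norm just obtained. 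The same two-step device reappears in the $\gamma$-Lipschitz part. With this correction your sketch coincides with the paper's proof.
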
\smallskip

The proof of Theorem \ref{main-theo}, given in Section \ref{sec-proof}, is based on a fixed-point procedure that involves several steps:
\begin{itemize}
\item we derive technical results about the lift $L$;
\item for any $\overline{h}\in C^{1,1}_0([-1,1])$, we apply Theorem \ref{the_ex_regu} to obtain   $(u(\overline{h}), p(\overline{h}))$ solving \eqref{BVprobRR} in $\Omega_{\overline{h}}$;

\item we reduce \eqref{beam-pb} to
a related beam equation with a given source depending on $\overline{h}$;
\item we prove existence and uniqueness of this new equation thereby providing a solution $\mathfrak{h}(\overline{h})\in C^{1,1}_0([-1,1])$;
\item we show that $\overline{h}\mapsto \mathfrak{h}(\overline{h})$ admits a unique fixed point in a suitable space.
\end{itemize}

\subsection{Preliminary results}
We first prove two properties of the lift force.

\begin{lemma}\label{lem-Lbound}
Let $h\in C^{1,1}_0([-1,1])$ satisfy the smallness assumption in Theorem \ref{the_ex_regu}. Let $\gamma \in [0, \overline{\gamma})$ and $(u,p)\in W^{2,2+\sigma}(\Omega_h)\times W^{1,2+\sigma}(\Omega_h)$ be the unique solution to \eqref{eq_main} from Theorem \ref{the_ex_regu}. Then, there exists  $C>0$, independent of $h$,  such that
\begin{equation}\label{load-control}
\|L(\cdot, h)\|_{L^\infty(-1,1)}\leq C \gamma .
\end{equation}
Moreover, $L(\cdot, h)\in C(-1,1)$.
\end{lemma}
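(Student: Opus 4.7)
The plan is to bound the one-dimensional lift integral $L(y, h)$ by performing a double trace argument starting from the regularity provided by Theorem \ref{the_ex_regu}: first descending from the 3D bulk $\Omega_h$ to the 2D lateral surface $\mathcal{S}^h$, and then from $\mathcal{S}^h$ to the slice curves $\partial \mathcal{B}_y^h$. The crucial analytic ingredient at both steps is a Sobolev trace threshold condition, and a direct check shows that the assumption $\sigma > 0$ in \eqref{eq_sigma} precisely ensures that both thresholds are met.

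First I would parametrize $\mathcal{S}^h$ by a $C^{1,1}$ chart
$$\Phi(y, \theta) = \bigl(\phi_1(y, \theta),\, y,\, \phi_3(y, \theta) + h(y)\bigr), \qquad (y, \theta) \in (-1,1) \times [0, 2\pi),$$
where $\bigl(\phi_1(y, \cdot), \phi_3(y, \cdot)\bigr)$ traces out $\partial \mathcal{B}_y$; the $C^{1,1}$-regularity of $\partial \mathcal{B}^*$ together with $h \in C^{1,1}_0([-1,1])$ makes $\Phi$ bi-Lipschitz with metric uniformly bounded above and away from zero under the smallness assumption on $\|h\|_{C^{1,1}}$. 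The standard trace theorem then yields $\nabla u|_{\mathcal{S}^h}, p|_{\mathcal{S}^h} \in W^{1 - 1/(2+\sigma), 2+\sigma}(\mathcal{S}^h)$, and pulling back via $\Phi$ transfers this regularity to the flat cylinder $(-1, 1) \times [0, 2\pi)$ with norm controlled by $C\gamma$ via \eqref{reg-est}.

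The heart of the argument is a second trace, in the $y$-variable. Since $\sigma > 0$, the exponent $s = 1 - 1/(2+\sigma)$ strictly exceeds the threshold $1/q = 1/(2+\sigma)$, so for every $y \in (-1, 1)$ the slice $(\nabla u, p)(y, \cdot)$ lies in $W^{1 - 2/(2+\sigma), 2+\sigma}([0, 2\pi))$ with norm uniformly bounded by $C\gamma$; moreover the map $y \mapsto (\nabla u, p)(y, \cdot)$ is continuous into this Sobolev space, hence into $L^{2+\sigma}([0, 2\pi))$. Combining H\"older's inequality with the boundedness of the unit normal $n$ and of the arclength Jacobian gives
$$|L(y, h)| \leq C\, \|\mathbb{T}(u,p)\|_{L^1(\partial \mathcal{B}_y^h)} \leq C\gamma \qquad \forall\, y \in (-1, 1),$$
which is \eqref{load-control}. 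The continuity $L(\cdot, h) \in C(-1, 1)$ then follows at once from the continuity of the slice trace and the continuous dependence of $n$ and of the arclength element on $y$.

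The main obstacle I expect is the careful bookkeeping of constants to ensure the bound in \eqref{load-control} is genuinely independent of $h$ within the smallness class; this reduces to the uniform control of the bi-Lipschitz constants of $\Phi$ by $\|h\|_{C^{1,1}}$, propagated through both trace theorems and the pull-back. The restriction to the open interval $(-1, 1)$ for continuity reflects the fact that at $y = \pm 1$ the curves $\partial \mathcal{B}_y^h$ degenerate into the attachment loci $\partial \mathcal{B}_{\pm 1} \cap \Gamma_y^\pm$, where the parametrization $\Phi$ is no longer regular and the slice trace need not extend up to the endpoints.
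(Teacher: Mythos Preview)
Your argument for the $L^\infty$ bound \eqref{load-control} is essentially the paper's: both apply a two-step trace from $\Omega_h$ to $\mathcal{S}^h$ and then to $\partial\mathcal{B}_y^h$, exploiting that $\sigma>0$ puts the intermediate fractional exponent above the second trace threshold. The only cosmetic difference is that the paper, after the first trace, passes from $W^{1-\frac{1}{2+\sigma},2+\sigma}(\partial\mathcal{B}^h)$ to the Hilbertian space $H^{1-\frac{1}{2+\sigma}}(\partial\mathcal{B}^h)$ by H\"older before slicing, whereas you stay in the $L^{2+\sigma}$-scale throughout; both routes land in $L^1(\partial\mathcal{B}_y^h)$ with constants uniform in $y$ and $h$.

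For the continuity $L(\cdot,h)\in C(-1,1)$ you take a genuinely different path. You deduce it from the continuity of the slice map $y\mapsto(\nabla u,p)(y,\cdot)$ into $W^{1-\frac{2}{2+\sigma},2+\sigma}$, which is a purely function-space fact (the embedding of $W^{s,p}$ on a product into $C$ in one factor valued in $W^{s-1/p,p}$ on the other, for $s>1/p$). The paper instead invokes an additional PDE ingredient: since $\mathcal{S}^h$ is $C^{1,1}$ and $u=0$ there, local elliptic regularity upgrades $(u,p)$ to $C^{1,\alpha}\times C^{0,\alpha}$ in any compact neighbourhood away from $\partial\mathcal{B}_{\pm1}$, making the integrand in \eqref{load} pointwise continuous. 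Your approach is more self-contained and reuses exactly the regularity already established in Theorem~\ref{the_ex_regu}; the paper's approach, on the other hand, yields the stronger information that the integrand itself is H\"older continuous up to $\mathcal{S}^h$, which is not needed here but is conceptually closer to Remark~\ref{cont-lift}.
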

\begin{proof}
Since $u\in W^{2,2+\sigma}(\Omega_h)$, we have that $\nabla u\in W^{1,2+\sigma}(\Omega_h)$ and it follows from the Trace Theorem  \cite[Theorem 3.37]{McLean2000}  that $\nabla u\in W^{1-\frac{1}{2+\sigma},2+\sigma}(\partial \mathcal{B}^h)$. The boundedness of $\partial \mathcal{B}^h$ and  H\"older inequality then imply that $\nabla u\in H^{1-\frac{1}{2+\sigma}}(\partial \mathcal{B}^h)$. Since $\sigma >0$, we can apply the Trace Theorem once again to get $\nabla u\in H^{\frac{1}{2}-\frac{1}{2+\sigma}}(\partial \mathcal{B}^h_y)$ and, in particular, $\nabla u\in L^2(\partial \mathcal{B}^h_y)$ for a.e.\ $y\in(-1,1)$. Moreover, the following chain of inequalities holds:
\begin{align*}
\left\|\nabla u\right\|_{L^1(\partial \mathcal{B}^h_y)}&\le C\left\|\nabla u\right\|_{L^2(\partial \mathcal{B}^h_y)}\le C\left\|\nabla u\right\|_{H^{\frac{1}{2}-\frac{1}{2+\sigma}}(\partial \mathcal{B}^h_y)}\\
&\le C\left\|\nabla u\right\|_{W^{1-\frac{1}{2+\sigma},2+\sigma}(\partial \mathcal{B}^h)}\le C\left\|u\right\|_{W^{2,2+\sigma}(\Omega_h)},
\end{align*}where the constants $C>0$ may differ but are all independent of $y$.
Similarly, since $p\in W^{1,2+\sigma}(\Omega_h)$ we obtain that $p\in H^{\frac{1}{2}-\frac{1}{2+\sigma}}(\partial \mathcal{B}^h_y)$ and, in particular, $p\in L^2(\partial \mathcal{B}^h_y)$. Thus, we end up with the following inequality
\begin{equation}
\left\|p\right\|_{L^1(\partial \mathcal{B}^h_y)}\le C\left\|p\right\|_{W^{1,2+\sigma}(\Omega_h)}.
\end{equation}
Therefore, we infer from the definition \eqref{load} that
\begin{equation*}
|L(y,h)|\le C\left(\left\|u\right\|_{W^{2,2+\sigma}(\Omega_h)}+\left\|p\right\|_{W^{1,1+\sigma}(\Omega_h)}\right)
\end{equation*}
for a.e. $y\in(-1,1)$ and, using the uniform regularity estimate \eqref{reg-est}, we find \eqref{load-control}.\par
As already noticed in the proof of Theorem \ref{the_ex_regu}, $\mathcal{S}^h$ is of class $C^{1,1}$. Since $u=0$ on $\mathcal{S}^h$, by
elliptic regularity, we then infer that $u\in W^{2,q}(\mathcal{N})$ in any (closed) neighbourhood $\mathcal{N}\subset\overline{\Omega}_h$ of any
point $\xi\in\mathcal{S}^h$. By Embedding Theorems, this implies $u\in C^{1,\alpha}(\mathcal{N})$ for any $\alpha<1$ and, hence,
$\nabla u\in C^{0,\alpha}(\mathcal{N})$. By using \eqref{eq_main}, we then infer that $p\in C^{0,\alpha}(\mathcal{N})$ for any $\alpha<1$.
Therefore, the integrand in \eqref{load} is continuous for any $y\in(-1,1)$ and, by letting $y$ vary, $L(\cdot, h)\in C(-1,1)$.
\end{proof}

\begin{remark}\label{cont-lift}
The fact that $L(\cdot, h)\in C(-1,1)$ validates the pointwise definition \eqref{load} of the lift.
The continuity over $[-1,1]$ could be achieved if the solution to \eqref{BVprobR} were more regular, but within weighted spaces (see for instance
\cite[Theorem 11.1.5]{Mazya-Rossmann2010}), or if $\theta(A)<\pi/2$ for any $A$, see Figure \ref{angle}, but this would not physically describe an actual bridge.
\end{remark}

Thanks to Theorem \ref{the_ex_regu},  we next discuss the well-posedness of \eqref{beam-pb} for the fluid velocity and pressure $ (u(\overline{h}), p(\overline{h}))$ determined by a given profile $\overline{h}$. We consider the modified ODE problem
\begin{equation}\label{mod-beam-pb}
		\begin{aligned}
			&h''''+ f(h)= L(\cdot, \overline{h})  \qquad \text{in} \quad (-1,1) \\[2pt]
			&h(\pm 1) = 0 , \qquad h'(\pm 1)=0 \ \text{ or } \ h''(\pm 1)=0.
		\end{aligned}
	\end{equation}

We introduce the nonlinear map that will have a fixed point.
\begin{proposition}\label{prop-exi-beam}
Let $\overline{h}\in C^{1,1}_0([-1,1]) $ satisfy the smallness assumption  in Theorem \ref{the_ex_regu} and let $\gamma \in [0, \overline{\gamma})$ . Then, \eqref{mod-beam-pb} admits a unique solution  $h\in \mathbb{H}^4\subset C^{1,1}_0([-1,1])$ and there exists $C>0$, independent of $\overline{h}$,  such that
\begin{equation}\label{bound-beam}
    \| h \|_{\mathbb{H}^4}\leq C\gamma
\end{equation}
\end{proposition}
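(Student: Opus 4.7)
The key observation is that, since the load $L(\cdot,\overline h)$ depends only on the \emph{given} profile $\overline h$, equation \eqref{mod-beam-pb} is a semilinear fourth-order ODE in $h$ with a fixed right-hand side, which I will attack variationally on the Hilbert space $\mathbb H$. Define the energy functional
\begin{equation*}
J(h)=\frac12\int_{-1}^{1}(h'')^{2}\,dy+\int_{-1}^{1}F(h)\,dy-\int_{-1}^{1}L(y,\overline h)\,h(y)\,dy,\qquad h\in\mathbb H,
\end{equation*}
where $F$ is the primitive of $f$ with $F(0)=0$. By \eqref{assf}, $F$ is convex and nonnegative, so the quadratic part is strictly convex, making $J$ strictly convex and continuous on $\mathbb H$. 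Using \eqref{Hnorm}, \eqref{SH-embedding}, and the $L^\infty$-bound \eqref{load-control} on $L(\cdot,\overline h)$, one gets
\begin{equation*}
J(h)\geq \tfrac12\|h\|_{\mathbb H}^{2}-S_{\mathbb H}\,\|L(\cdot,\overline h)\|_{L^\infty(-1,1)}\,\|h\|_{\mathbb H},
\end{equation*}
so $J$ is coercive and admits a unique minimizer $h\in\mathbb H$.

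The minimizer satisfies the Euler--Lagrange identity
\begin{equation*}
\int_{-1}^{1}\!\!h''\phi''\,dy+\int_{-1}^{1}\!\!f(h)\phi\,dy=\int_{-1}^{1}\!\!L(y,\overline h)\phi\,dy\qquad\forall\phi\in\mathbb H,
\end{equation*}
which is exactly the weak form of \eqref{mod-beam-pb} (and includes the natural boundary condition $h''(\pm 1)=0$ in the hinged case). Uniqueness of weak solutions in $\mathbb H$ follows from the strict convexity of $J$ or, equivalently, by testing the difference of two solutions against itself and invoking monotonicity of $f$.

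For regularity, I use a standard bootstrap: since $h\in\mathbb H\subset C([-1,1])$ and $f$ is Lipschitz with $f(0)=0$, one has $f(h)\in L^\infty(-1,1)$, and Lemma \ref{lem-Lbound} gives $L(\cdot,\overline h)\in L^\infty(-1,1)$. Hence $h''''= L(\cdot,\overline h)-f(h)\in L^\infty(-1,1)\subset L^{2}(-1,1)$, and by integrating four times against the boundary conditions one concludes $h\in H^{4}(-1,1)$, i.e.\ $h\in \mathbb H^{4}$ in either the clamped or hinged setting; the Sobolev embedding $H^{4}(-1,1)\hookrightarrow C^{1,1}([-1,1])$ then places $h$ in $C^{1,1}_{0}([-1,1])$.

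Finally, the a priori bound \eqref{bound-beam} is obtained in two steps. Testing the weak formulation with $\phi=h$, using that $f(h)h\geq 0$ by monotonicity and $f(0)=0$, and applying \eqref{SH-embedding} together with \eqref{load-control}, yields
\begin{equation*}
\|h\|_{\mathbb H}^{2}\leq \int_{-1}^{1}L(y,\overline h)\,h\,dy\leq 2S_{\mathbb H}\,\|L(\cdot,\overline h)\|_{L^\infty(-1,1)}\,\|h\|_{\mathbb H}\leq C\gamma\,\|h\|_{\mathbb H},
\end{equation*}
hence $\|h\|_{\mathbb H}\leq C\gamma$. Then, from the strong equation,
\begin{equation*}
\|h\|_{\mathbb H^{4}}=\|h''''\|_{L^{2}(-1,1)}\leq \|L(\cdot,\overline h)\|_{L^{2}(-1,1)}+\mathrm{Lip}(f)\,\|h\|_{L^{2}(-1,1)}\leq C\gamma,
\end{equation*}
with $C$ independent of $\overline h$ since the estimates on $L$ in Lemma \ref{lem-Lbound} are uniform in $\overline h$ under the smallness assumption. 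I do not expect a serious obstacle here; the only points requiring care are the choice of the correct space $\mathbb H$ matching the prescribed boundary conditions (so that \eqref{Hnorm} really is a norm) and the verification that the natural boundary condition $h''(\pm1)=0$ is recovered in the hinged case, which is automatic once $h\in H^{4}$.
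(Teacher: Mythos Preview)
Your proof is correct and follows essentially the same route as the paper: both minimize the convex energy on $\mathbb H$, test the weak formulation with $h$ itself to get the $\mathbb H$-bound via \eqref{SH-embedding} and \eqref{load-control}, and then bootstrap to $\mathbb H^4$ using $f(h),\,L(\cdot,\overline h)\in L^\infty\subset L^2$. The only cosmetic difference is that the paper phrases the regularity step as ``elliptic regularity'' while you write out $h''''=L-f(h)$ directly, which is the same thing in one dimension.
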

\begin{proof}Since $\overline{h}$ is given, we may rewrite \eqref{mod-beam-pb} as
\begin{equation}\label{compatibility2}
h''''(y)+f\big(h(y)\big)=g(y)= L(y, \overline{h}(y))\qquad \text{for a.e. } y\in(-1,1)\, ,
\end{equation}where $g\in L^\infty(-1,1)$ is a given function of $y$. Weak solutions to \eqref{compatibility2} are critical points  of the related energy functional
\begin{equation}\label{energy-fun}
 h\mapsto E(h)=\int_{-1}^{1}\left(\frac{h''(y)^{2}}2 +F\big(h(y)\big) - g(y)h(y)\, \right)dy\,.
\end{equation} Due to \eqref{assf}, the integrand in $E$ is strongly convex with respect to $(h, h'')\in \mathbb{R}^2$ and hence $E$ is strictly convex with respect to $h$ on $\mathbb{H}$ (also if $F(h)\equiv f(h)\equiv0$), so that $E$ admits a unique minimizer in $\mathbb{H}$ that weakly solves \eqref{compatibility2}.
 Moreover, taking the solution itself as a test function in the weak formulation, we obtain
\begin{equation}
    \|h\|^2_{\mathbb{H}} + \int_{-1}^1 f(h)h  =\int_{-1}^1 gh
\end{equation}
and, by \eqref{assf} and \eqref{SH-embedding}, we get the bound
    \begin{equation}\label{bound-hH}
       \|h\|_{\mathbb{H}} \leq 2S_{\mathbb{H}} \|g\|_{L^{\infty}(-1,1)}.
    \end{equation}
   The solution $h$ is, in fact, more regular. Indeed,  from \eqref{assf}, we know that $f(h)\in L^\infty(-1,1)\subset L^2(-1,1)$ and, from Lemma \ref{lem-Lbound}, that $g\in L^\infty(-1,1)\subset L^2(-1,1)$. Then, by elliptic regularity, it follows that $h\in \mathbb{H}^4 $. Moreover, there exist constants $C>0$ (which may vary in each inequality) such that
\begin{equation*}\begin{aligned}
\|h\|_{\mathbb{H}^4}& \leq C \left(\|f(h)\|_{L^2(-1,1)} + \|g\|_{L^2(-1,1)}\right) \\[5pt]&\leq C\left( \|h\|_{L^\infty(-1,1)} + \|g\|_{L^\infty(-1,1)}\right)\stackrel{\eqref{bound-hH}}\leq   C\|g\|_{L^\infty(-1,1)}\leq C \gamma
\end{aligned}
\end{equation*} where the last inequality is due to Lemma \ref{lem-Lbound} applied to $L(\cdot, \overline{h})=g(\cdot)$.
\end{proof}

The proof of Proposition \ref{prop-exi-beam}  shows that the solution to \eqref{compatibility2}  belongs to $W^{4, \infty} (-1,1)$, while Lemma \ref{lem-Lbound} implies that it also belongs to $C^4(-1,1)$. However, this additional regularity will not be used in the sequel.\par

In order to study the dependence of the lift on the beam profile $h$, it is convenient to introduce a setting where the fluid domain is independent of $h$. To this end, we define a suitable map $\varphi_h$.
\begin{lemma}\label{lemma-diffeo}
Let $\mathcal{D}\subset \Gamma_y^-$ be an open set such that the cylinder $\mathcal{C}=\mathcal{D} \times [-1,1]$ strictly contains any $\mathcal{B}^h$ for $h$ sufficiently small. Consider a smooth cut-off function $\xi:[-R, R]\times [-1,1]\rightarrow [0,1]$ such that
\begin{equation}\label{cutoff-diffeo}\begin{aligned}\xi=1\quad  \text{in} \quad \overline{\mathcal{D}} \quad \text{and} \quad
\xi=0 \quad \text{in}\quad \left([-R\times R]\times [-1,1]\right)\setminus \lambda\mathcal{D}
    \end{aligned}
\end{equation}with $\lambda>1$ ensuring that the homothetic $\lambda \mathcal{C}\subset \Gamma_y^-$.
For $h\in \mathbb{H}$, consider the map
	\begin{equation}\label{diffeo}\varphi_h (x,y,z)= \left(x, y, z+ \xi(x,z)h(y)\right).\end{equation} Then, there exists $\beta>0$ such that, for $\|h\|_{\mathbb{H}}<\beta$,  $\varphi_h$ is a $C^1$-diffeomorphism from $\overline{\Omega}_0$ to $\overline{\Omega}_h$ satisfying \begin{equation}\label{diff-prop}
			\varphi_h (\partial\mathcal{B})=\partial\mathcal{B}_h , \quad \varphi_h (\overline{\Gamma_x^\pm })=\overline{\Gamma_x^\pm},
           \quad \varphi_h (\overline{\Gamma_y^\pm \setminus \mathcal{B}_{\pm 1} })=\overline{\Gamma_y^\pm \setminus \mathcal{B}_{\pm 1} } \ \
              \text{and} \ \ \varphi_h (\overline{\Gamma_z^\pm })=\overline{\Gamma_z^\pm}.
            \end{equation}
\end{lemma}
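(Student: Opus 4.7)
The plan is to verify that, provided $\beta$ is chosen small enough, $\varphi_h$ is a $C^1$-diffeomorphism of $\overline{P}$ onto itself that sends $\mathcal{B}$ to $\mathcal{B}^h$; this will automatically yield the diffeomorphism $\overline{\Omega}_0\to\overline{\Omega}_h$ together with \eqref{diff-prop}. Regularity comes for free: the 1D Sobolev embedding $\mathbb{H}\subset H^2((-1,1))\hookrightarrow C^1([-1,1])$ together with $\xi\in C^\infty$ give $\varphi_h\in C^1(\overline{P};\mathbb{R}^3)$. The Jacobian matrix is lower triangular after stripping off the identity action on $(x,y)$, and one computes
\[
\det D\varphi_h(x,y,z) = 1 + (\partial_z\xi)(x,z)\, h(y).
\]
Invoking \eqref{SH-embedding} one has $\|h\|_{L^\infty(-1,1)}\le S_{\mathbb{H}}\|h\|_\mathbb{H}<S_{\mathbb{H}}\beta$, and fixing
\[
\beta < \frac{1}{2\, S_{\mathbb{H}}\,\|\partial_z\xi\|_{L^\infty}}
\]
forces $\det D\varphi_h \ge 1/2$ uniformly on $\overline{P}$, so the Inverse Function Theorem provides a local $C^1$-inverse around every point.

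For global bijectivity I would pass to a 1D slice map. Fix $(X,Y)\in[-R,R]\times[-1,1]$ and set $F_{X,Y}(z) = z + \xi(X,z)h(Y)$ for $z\in[-1,1]$. The same Jacobian bound yields $F_{X,Y}'(z)\ge 1/2$, so $F_{X,Y}$ is strictly increasing and continuous; since $\lambda\mathcal{D}$ is strictly contained in the open rectangle $(-R,R)\times(-1,1)$ (the $(x,z)$-parametrization of $\Gamma_y^-$), the cutoff $\xi(X,\pm 1) = 0$, whence $F_{X,Y}(\pm 1) = \pm 1$. Hence $F_{X,Y}$ bijects $[-1,1]$ onto itself, and, letting $(X,Y)$ vary, $\varphi_h$ is a continuous bijection of $\overline{P}$ onto $\overline{P}$. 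Combined with the local invertibility above, this makes $\varphi_h$ a $C^1$-diffeomorphism of $\overline{P}$.

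It remains to verify \eqref{diff-prop}, which follows from the support of $\xi$ and the endpoint conditions on $h$. On $\overline{\Gamma_x^\pm}$ one has $x=\pm R$, and on $\overline{\Gamma_z^\pm}$ one has $z=\pm 1$; in both cases $\xi \equiv 0$ (since $\lambda\mathcal{D}$ is strictly interior to $(-R,R)\times(-1,1)$), so $\varphi_h$ is the identity there. On $\overline{\Gamma_y^\pm}$ one has $y=\pm 1$, and since $\mathbb{H}\subset H^1_0(-1,1)$ forces $h(\pm 1)=0$, $\varphi_h$ is again the identity, which in particular gives $\varphi_h(\overline{\Gamma_y^\pm\setminus\mathcal{B}_{\pm 1}}) = \overline{\Gamma_y^\pm\setminus\mathcal{B}_{\pm 1}}$. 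Finally, since $\mathcal{C}$ strictly contains $\mathcal{B}$, the $(x,z)$-projection of $\mathcal{B}$ lies in $\mathcal{D}$ where $\xi\equiv 1$; hence the restriction of $\varphi_h$ to $\mathcal{B}$ coincides with the vertical translation $(x,y,z)\mapsto (x,y,z+h(y))$, so $\varphi_h(\partial\mathcal{B}) = \partial\mathcal{B}^h$ by the definition \eqref{Omegah}. The bijection of $\overline{P}$ then forces $\varphi_h(\overline{\Omega}_0) = \overline{\Omega}_h$. The main technical point is the Jacobian step, where the functional $\mathbb{H}$-smallness is converted through $S_{\mathbb{H}}$ into pointwise control sufficient for invertibility of each slice $F_{X,Y}$; the remaining identifications reduce to geometric bookkeeping from the support of $\xi$ and the Dirichlet endpoint condition on $h$.
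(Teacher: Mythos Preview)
Your argument is correct and follows essentially the same route as the paper: Sobolev embedding for $C^1$-regularity, computation of $\det D\varphi_h = 1+(\partial_z\xi)h$ with smallness of $\|h\|_\mathbb{H}$ forcing positivity, and verification of the boundary identities via the support of $\xi$ and $h(\pm1)=0$. The one place you are more explicit than the paper is the global bijectivity: the paper simply asserts that the Jacobian bound together with the boundary properties yields a diffeomorphism, whereas you supply the clean 1D slice argument showing each $z\mapsto z+\xi(X,z)h(Y)$ is a strictly increasing bijection of $[-1,1]$, which is a welcome clarification.
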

\begin{proof}First, we take $\beta$  small enough such that \eqref{eq_bound_infty} holds, which is possible in view of \eqref{SH-embedding}. The embedding $\mathbb{H}\subset C^1([-1,1])$ and the smoothness of $\xi$ yield $C^1$-regularity of $\varphi_h$. Next, we want to show that the determinant of its Jacobian
$$J_h(x,y,z)= \left(\begin{matrix}
1&0&0\\[5pt]
0&1&0\\[5pt]
\partial_x\xi(x,z)h(y)&\xi(x,z)h'(y)&1+ \partial_z\xi(x,z)h(y)
\end{matrix}\right)$$
is positive and uniformly bounded in $\Omega_0$, that is, there exists $C>0$ such that
\begin{equation}\label{detJeta} C \leq \det(J_h)=1 + \partial_z\xi(x,z) h(y) \leq \frac{1}{C}, \qquad (x,y,z).\in \Omega_0\end{equation}
It follows from \eqref{SH-embedding} that
\begin{equation}\label{est-detJeta}
 1- C_1\|h\|_{\mathbb{H}}\leq \det(J_h)\leq  1 +C_2 \|h\|_{\mathbb{H}}
\end{equation}for some $C_1,C_2>0$ depending on $\xi$ and $S_\mathbb{H}$, so that
there exists (a possibly smaller) $\beta>0$ such that $$0<C(\beta)< \det(J_h)\leq \frac{1}{C(\beta)}$$ whenever $\|h\|_{\mathbb{H}}<\beta$. Moreover, the properties of the cut-off $\xi$ and the boundary conditions $h(\pm1)=0$ yield
\begin{equation}\label{diff-prop-exp} \begin{aligned}
 &\varphi_h(x,y,z)= (x,y, z+h(y)) \quad \text{for} \quad (x,y,z)\in\mathcal{\partial B},\\[5pt]
 &\varphi_h(x, \pm 1, z)= (x,\pm 1, z +\xi(x,z)h(\pm 1))=(x, \pm 1, z) \ \text{for} \ (x,z)\in[-R,R]\times [-1,1],
 \\[5pt]
\end{aligned}\end{equation}Moreover, $\phi_h$ is the identity map on each of the four faces $\overline{\Gamma}_x^\pm$ and $\overline{\Gamma}_z^\pm$
Hence, the $C^1$-regularity,  \eqref{detJeta} and \eqref{diff-prop-exp} prove that $\varphi_h$ is a $C^1$-diffeomorphism from $\overline{\Omega}_0$ to $\overline{\Omega}_h$ that satisfies \eqref{diff-prop}.
\end{proof}

\begin{remark}
The diffeomorpshim \eqref{diffeo} is the most direct that can be introduced to transform $\Omega_h$ into $\Omega_0$ while ensuring \eqref{diff-prop}. Its regularity is the same as that of $h$, but this does not represent the optimal choice. To improve regularity, it is necessary to introduce a so-called \emph{regularizing} diffeomorphism that permits to gain half derivative compared to the regularity of $h$, see \cite{BocCasGan} for an example. However, we will show later that $h\in\mathbb{H}^4$, which provides us with enough regularity to carry out the analysis working with \eqref{diffeo}.
\end{remark}

Assuming that $\|h\|_{\mathbb{H}}< \beta$ with $\beta>0$ as in Lemma \ref{lemma-diffeo}, by means of the diffeomorphism $\varphi_h$ in \eqref{diffeo}, we transform \eqref{eq_main} in $\Omega_h$ as an elliptic problem in $\Omega_0$. After
introducing the matrices
\begin{equation}\label{matrices-diffeo}
\begin{aligned}
    A_h= \det(J_h) M_h^TM_h \quad \text{and} \quad B_h= \det(J_h) M_h \quad \text{with} \quad  M_h = (J_h^{-1})^T,\\[5pt]
\end{aligned}
\end{equation}we obtain that $(U,P)=(u, p)\circ \varphi_h$ solves the elliptic problem
\begin{equation}\label{BVprob_trans}
		\begin{aligned}
			&-\eta\nabla \cdot\left( A_h \nabla U \right)  +  U\cdot B_h\nabla U+ B_h\nabla P=0, \quad
			\nabla\cdot (B_h^TU)=0\quad\text{in}\quad \Omega_0\\[2pt]
			&U=0 \quad \text{on} \quad  \partial \mathcal{B} ,\quad U=\gamma Ve_1\quad \text{on} \quad \overline{\Gamma_{y}^\pm \setminus \mathcal{B}_{\pm 1}}\cup\Gamma_z^\pm\\[2pt]
			&U= \gamma V_ie_1\quad \text{on} \quad \Gamma_x^-,\quad U= \gamma V_oe_1\quad\text{on} \quad \Gamma_x^+.\\[5pt]
		\end{aligned}
	\end{equation}

We are now ready to study the dependence of $L(y, h)$, defined in \eqref{load},  on $h$  while keeping $y$ fixed. In the next proposition, we prove its Lipschitz continuity.

\begin{proposition}\label{L-incre}
	There exists $\beta_0\in (0, \beta]$ with $\beta$ as in Lemma \ref{lemma-diffeo} such that $ h\mapsto L( y, h)$ is Lipschitz continuous in $\mathbb{B}_{\beta_0}$. More precisely, for any $h_1,h_2 \in \mathbb{B}_{\beta_0}$,
\begin{equation}\label{lip-lift}
    \|L(\cdot, h_1) - L(\cdot, h_2)\|_{L^\infty(-1,1)} \leq  \gamma C(\beta_0) \|h_1-h_2\|_{\mathbb{H}^4}
\end{equation}with $\gamma \in [0,\gamma_0)$ for some $\gamma_0\in(0,\overline{\gamma}]$ and $\overline{\gamma}$ as in Theorem \ref{the_ex_regu}.
\end{proposition}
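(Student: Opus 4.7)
The approach is to transport everything to the fixed reference domain $\Omega_0$ via the diffeomorphisms $\varphi_{h_1}, \varphi_{h_2}$ of Lemma \ref{lemma-diffeo}, and then to rely on a Lipschitz estimate for the transformed velocity--pressure pair $(U(h), P(h)) := (u(h), p(h)) \circ \varphi_h$. First I would rewrite the lift $L(y, h)$ in \eqref{load} as an integral over the $h$-independent curve $\partial\mathcal{B}_y$: a direct change of variables yields an expression that is bilinear in $(U(h), \nabla U(h), P(h))$, whose coefficients, together with the pulled-back unit normal $n_h$ and line element $d\sigma_h$, are smooth functions of the matrices $A_h, B_h$ from \eqref{matrices-diffeo}. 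Since $\mathbb{H}^4 \hookrightarrow C^{1}([-1,1])$ and $\xi$ is smooth, $h \mapsto (A_h, B_h, n_h, d\sigma_h)$ is Lipschitz from $\mathbb{B}_{\beta_0}$ into $W^{1,\infty}$, with constant $C(\beta_0)$.

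The crucial step is the stability estimate
\begin{equation*}
\|U(h_1) - U(h_2)\|_{W^{2,2+\sigma}(\Omega_0)} + \|P(h_1) - P(h_2)\|_{W^{1,2+\sigma}(\Omega_0)} \leq C(\beta_0)\,\gamma\,\|h_1 - h_2\|_{\mathbb{H}^4}.
\end{equation*}
Subtracting the transformed Navier--Stokes systems \eqref{BVprob_trans} for $(U_i, P_i) := (U(h_i), P(h_i))$ produces a linear Stokes-type problem on $\Omega_0$ for the difference $(W, Q) := (U_1 - U_2, P_1 - P_2)$, with the same corner/edge structure as in Theorem \ref{the_ex_regu} and with principal part that perturbs the standard Stokes operator by $O(\beta_0)$. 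Its right-hand side has two kinds of contributions: (i) the differences $A_{h_1} - A_{h_2}$ and $B_{h_1} - B_{h_2}$ acting on the fixed data $U_2, \nabla U_2, P_2$, bounded in $L^{2+\sigma} \times W^{1,2+\sigma}$ by $C(\beta_0)\,\gamma\,\|h_1 - h_2\|_{\mathbb{H}^4}$ thanks to \eqref{reg-est} and the $W^{1,\infty}$-Lipschitz dependence of $A_h, B_h$ on $h$; and (ii) convective remainders linear in $(W, Q)$ with coefficients of size $O(\gamma)$, which are absorbed into the left-hand side by taking $\gamma$ small. Applying to this linear problem the same reflection and Maz'ya--Rossmann scheme used in the proof of Theorem \ref{the_ex_regu} then produces the estimate, since the operator pencil condition \eqref{eq_1} at the reentrant edges $\partial\mathcal{B}_{\pm 1}$ depends continuously on the coefficients and is preserved under small perturbations.

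With the stability estimate in hand, \eqref{lip-lift} follows by writing $L(y, h_1) - L(y, h_2)$ as an integral over $\partial\mathcal{B}_y$ split into a part linear in $(W, Q)$ with coefficients frozen at $h_1$, and a remainder carrying $(A_{h_1} - A_{h_2}, B_{h_1} - B_{h_2}, n_{h_1} - n_{h_2}, d\sigma_{h_1} - d\sigma_{h_2})$ tested against $(U_2, \nabla U_2, P_2)$. Each piece is controlled uniformly in $y \in (-1,1)$ by the very trace chain already established in Lemma \ref{lem-Lbound} (namely $W^{2,2+\sigma}(\Omega_0) \hookrightarrow L^{1}(\partial\mathcal{B}_y)$ via the two successive trace theorems), and combined with \eqref{reg-est} applied to $(U_2, P_2)$ and the stability estimate applied to $(W, Q)$, it delivers the bound $C(\beta_0)\gamma\|h_1-h_2\|_{\mathbb{H}^4}$, which is equivalent to \eqref{lip-lift} by \eqref{H4norm}. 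The main obstacle is the stability estimate itself: the variable-coefficient linear problem for $(W, Q)$ has to be handled in the singular-boundary regularity framework of \cite{Mazya-Rossmann2010}, and one must verify that all relevant operator pencils -- both at the edges of $\partial P$ (treated by reflection) and at $\partial\mathcal{B}_{\pm 1}$ (treated directly via the dihedral/polyhedral cone construction) -- continue to satisfy \eqref{eq_1} under $O(\beta_0)$ perturbations of the principal symbol, possibly after a further shrinking of $\beta_0$.
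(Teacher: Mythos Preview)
Your approach is correct and follows a somewhat different, more direct route than the paper. Both arguments begin by pulling everything back to $\Omega_0$ via $\varphi_h$ and rewriting the lift as an integral over $\partial\mathcal{B}_y$ (the paper gives this as formula \eqref{lift-trans}), and both end with the same trace chain from Lemma~\ref{lem-Lbound} applied to the decomposed difference $L(y,h_1)-L(y,h_2)$. The divergence is in how the stability estimate for $(U(h),P(h))$ is obtained. The paper does not subtract equations: instead it sets up the transformed problem as $\mathcal{H}(h,U,P)=0$, verifies that $D_{(U,P)}\mathcal{H}$ is an isomorphism by a contraction argument on the Stokes problem \eqref{stokes-mod}, applies the Implicit Function Theorem to obtain genuine $C^1$ dependence $h\mapsto(U(h),P(h))$, bounds the Fr\'echet derivative uniformly in $h$, and then invokes the Mean Value Inequality. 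Your direct subtraction is exactly what the paper itself does later, in the proof of Theorem~\ref{main-theo}, for Lipschitz dependence on $\gamma$ (see \eqref{WQ-pb}--\eqref{WQ-est}); it is more elementary in that it avoids the IFT machinery, while the paper's route yields the slightly stronger $C^1$ regularity (which, however, is not used elsewhere).

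One small refinement: your concern about verifying the operator-pencil condition \eqref{eq_1} under $O(\beta_0)$ perturbations of the principal symbol is valid but can be sidestepped. Since $A_{h_1}-\mathbb{I}$ and $B_{h_1}-\mathbb{I}$ are $O(\beta_0)$ in $W^{1,\infty}$, the cleanest route (and the one the paper takes when proving that $\mathcal{L}$ is an isomorphism) is to freeze the principal part at the standard Stokes operator, move the variable-coefficient terms to the right-hand side, and absorb them by a contraction argument for $\beta_0$ small. Then the only regularity input needed is the Maz'ya--Rossmann estimate for the constant-coefficient Stokes problem on $\Omega_0$, which is precisely Theorem~\ref{the_ex_regu} with $h\equiv 0$; no perturbation analysis of the pencils themselves is required.
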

\begin{proof}
With $\varphi_h$ at hand,   the lift in \eqref{load}
can be written in terms of the $h$-independent fluid domain $\Omega_0$, namely,
    \begin{equation} \label{lift-trans}\begin{aligned}
        L(y,h)&=-e_3\cdot \int_{\partial \mathcal{B}_y}\widetilde{\mathbb{T}}_h(U, P) M_hn_0\\&=-e_3 \cdot \int_{\partial \mathcal{B}_y}  \left[\eta (M_h \nabla U + (M_h\nabla U)^T) - P\mathbb{I}\right] M_hn_0
        \end{aligned}
    \end{equation}
where $(U,P)$ solves \eqref{BVprob_trans} and $n_0$ is the  unit normal vector to $\partial \mathcal{B}$ pointing inside $\mathcal{B}$. In \eqref{lift-trans}, we have used the fact $\det(J_h)=1$ on $\partial \mathcal{B}_y$. We now show that, there exists $\beta_0>0$ such that
\begin{equation}\label{claim}\vspace{0.1em}
h\mapsto (U(h),P(h))\in W^{2, 2+\sigma}(\Omega_0) \times W^{1, 2+\sigma}(\Omega_0)\mbox{ belongs to }  C^1(\mathbb{B}_{\beta_0}).\vspace{0.1em}
\end{equation}
 To this end, with $\beta$ as in Lemma \ref{lemma-diffeo} we express \eqref{BVprob_trans} as
\begin{equation}\label{implicit}
\mathcal{H}(h,U, P)=0,
\end{equation}where  $\mathcal{H} :  \mathbb{B}_\beta\times  X \rightarrow Y$  with
\begin{equation*}\begin{aligned}
&X=W^{2, 2+\sigma}(\Omega_0)\times W^{1, 2+\sigma}(\Omega_0),\\
&Y=\bigg\{(\chi_1,\chi_2,\chi_3)\in L^{2+\sigma}(\Omega_0)\times W^{1, 2+\sigma}(\Omega_0)\times W^{2-\tfrac{1}{2+\sigma}, 2+\sigma}(\partial \Omega_0)  \\
&\qquad \qquad  \text{such that} \quad \int_{\Omega_0} \chi_2 = \int_{\partial \Omega_0}B_h^T \chi_3 \cdot n \bigg\}, \end{aligned}\end{equation*} and the components of $\mathcal{H}$ read
\begin{equation}\label{defH}
	\begin{aligned}
	\mathcal{H}_1(h, U, P)= &-\eta\nabla \cdot (A_h\nabla U) + U \cdot B_h\nabla U + B_h \nabla P,\\[5pt]
	\mathcal{H}_2(h, U, P)= & \ \nabla \cdot (B_h^T U), \\[5pt]
    \mathcal{H}_3(h, U, P)= & \ U_{|_{\partial \Omega_0}} - u_*,
	\end{aligned}
\end{equation}where $u_*$ is as in \eqref{u*}. We explicitly compute the matrices in \eqref{matrices-diffeo}, which read
\begin{equation}\label{exp-matrices}\begin{aligned}
 &A_h= \mathbb{I}  +\left(\begin{matrix}
\partial_z \xi h
  & 0 & -\partial_x \xi h \\[5pt]0&\partial_z \xi h & - \xi h'\\[5pt]
-\partial_x \xi h& - \xi h'&\frac{-\partial_z\xi h + (\partial_x \xi h)^2 + (\xi h')^2}{1+ \partial_z \xi h}
\end{matrix}\right), \
 &B_h= \mathbb{I} +\left(\begin{matrix}
\partial_z \xi h  & 0&-\partial_x \xi h \\[5pt]0 & \partial_z \xi h&  - \xi h'\\[5pt]
0 & 0& 0
\end{matrix}\right).\\[5pt]
\end{aligned}
\end{equation}
First, we have that $h\mapsto\mathcal{H}(h, U, P)$ is $C^1(\mathbb{B}_{\beta})$ since Lemma \ref{lemma-diffeo} yields the same regularity for the maps $h\mapsto A_h$, $h\mapsto B_h$ and $h\mapsto B_h^T$. Second, $(U,P)\mapsto \mathcal{H}(h, U,P)$ is $C^1(X)$ for any $h\in \mathbb{B}_{\beta}$; indeed, the linear differential operator $\mathcal{L}=D_{(U, P)}\mathcal{H}[h,U,P]$, whose components read
\begin{equation*}\begin{aligned}
\mathcal{L}_1( \chi, \Pi)= &-\eta\nabla \cdot (A_h\nabla \chi) + \chi \cdot B_h\nabla U + U \cdot B_h\nabla \chi+ B_h\nabla \Pi,\\[5pt]
\mathcal{L}_2( \chi, \Pi)= & \ \nabla \cdot (B_h^T\chi),\\[5pt]
\mathcal{L}_3( \chi, \Pi)  = &  \ \chi_{|_{\partial \Omega_0}},
\end{aligned}
\end{equation*} is bounded from  $X$ to $Y$, hence it is Fréchet differentiable. Moreover, it is continuous with respect to $(h,U,P)\in \mathbb{B}_\beta\times X$.
Thus, $(h, U, P) \mapsto\mathcal{H}(h, U, P)$ belongs to $C^1(  \mathbb{B}_\beta\times X)$. \par Moreover, we claim that $\mathcal{L}$ is an isomorphism from $X$ to $Y$. To prove this assertion, let us consider $(\phi_1, \phi_2, \phi_3 )\in Y$ and the linear elliptic problem \begin{equation}\label{iso-eq}\mathcal{L}(\chi, \Pi)= (\phi_1, \phi_2, \phi_3), \end{equation} which can be written as the inhomogeneous Stokes-type system
\begin{equation}\label{stokes-mod}\begin{aligned}
&-\eta \Delta \chi + \nabla \Pi =  - \chi \cdot B_h\nabla U -U\cdot B_h\nabla \chi \\
&  \quad \qquad \qquad \qquad-\eta\nabla \cdot ((\mathbb{I}-A_h)\nabla \chi) + (\mathbb{I}-B_h)\nabla \Pi + \phi_1 \quad &&&&\mbox{in}  \quad \Omega_0 \\[5pt]\ &\nabla\cdot \chi  = \ \nabla \cdot (\mathbb{I}-B_h^T \chi) +\phi_2 \quad &&&&\mbox{in} \quad \Omega_0 \\[5pt]
&\chi_{|_{\partial \Omega_0}}= \ \phi_3.
\end{aligned}
\end{equation}
By \eqref{exp-matrices} and since $h\in \mathbb{B}_\beta$, there exist $C_1(\beta),C_2(\beta)>0$, with $C_{1}(\beta), C_2(\beta)\rightarrow 0$ as $\beta\rightarrow 0$, such that
$$\|\mathbb{I}-A_h\|_{W^{1, \infty}(\Omega_0)}\leq C_1(\beta) \quad \text{and} \quad \|\mathbb{I}-B_h\|_{W^{2,\infty}(\Omega_0)}\leq C_2(\beta).$$ We hence infer that
there exist $\beta_0\in (0, \beta]$ and $r_0>0$ such that, by a contraction argument,  \eqref{stokes-mod} admits a unique solution $(\chi, \Pi)\in X$ provided that
\begin{equation}\label{small-cond}
\|h\|_{\mathbb{H}^4}<\beta_0 \qquad \mbox{and} \quad 	\|U\|_{W^{2, 2+\sigma}(\Omega_0)}< r_0.
\end{equation} Moreover, the properties of the diffeomorpshim $\varphi_h$ imply that
\begin{equation}\label{controlU-u}\begin{aligned}
 c\|u\|_{W^{2, 2+\sigma}(\Omega_h)}\leq &\|U\|_{W^{2, 2+\sigma}(\Omega_0)}\leq C\|u\|_{W^{2, 2+\sigma}(\Omega_h)},\\[5pt] c\|p\|_{W^{1, 2+\sigma}(\Omega_h)}\leq &\|P\|_{W^{1, 2+\sigma}(\Omega_0)}\leq C\|p\|_{W^{1, 2+\sigma}(\Omega_h)},
 \end{aligned}
\end{equation} with constants $0<c\leq C$ independent of $h$, by taking $\gamma \in (0,\overline{\gamma})$ sufficiently small, the bound \eqref{reg-est}
yields the second condition in \eqref{small-cond}. Thus, $\mathcal{L}$ is an isomorphism and there exists $C>0$, independent of $h$, such that the unique solution to \eqref{iso-eq} satisfies
\begin{equation}\label{est-iso}\begin{aligned}
    &\|\chi\|_{W^{2, 2+\sigma}(\Omega_0)} + \|\Pi\|_{W^{1, 2+\sigma}(\Omega_0)} \\[5pt]&\leq C \left( \|\phi_1\|_{L^{2+\sigma}(\Omega_0)} + \|\phi_2\|_{W^{1, 2+\sigma}(\Omega_0)} + \|\phi_3\|_{W^{2-\frac{1}{2+\sigma}, 2+\sigma}(\partial \Omega_0)}\right)\end{aligned}
\end{equation}
Therefore,  applying the Implicit Function Theorem to \eqref{implicit}, we conclude \eqref{claim}.

In addition, it follows from \eqref{claim} that there differential operators $D_h U[h] : \mathbb{B}_{\beta_0}\rightarrow W^{2, 2+\sigma}(\Omega_0)$ and $D_h P[h] : \mathbb{B}_{\beta_0}\rightarrow W^{1,2+\sigma}(\Omega_0)$ are bounded. We also know from \eqref{implicit} that, for any $\tilde h\in \mathbb{B}_{\beta_0}$,  $$U_h(\tilde{h}):=D_h U[h] \tilde{h}\quad \text{and} \quad P_h(\tilde{h}):=D_h P[h] \tilde{h}$$ solve the linear elliptic problem
\begin{equation*}
\mathcal{L}( U_h(\tilde{h}), P_h(\tilde{h}) )= - D_h\mathcal{H}[h, U, P]\tilde{h},
\end{equation*} which explicitly reads
\begin{equation*}
\begin{aligned}
&-\eta\nabla \cdot (A_h\nabla U_h(\tilde{h})) + U_h(\tilde{h}) \cdot B_h\nabla U + U \cdot B_h\nabla U_h(\tilde{h}) + B_h\nabla P_h(\tilde{h})= F(h, \tilde{h}, U, P)
&\mbox{in} \ \Omega_0 \\[5pt]
&\nabla \cdot (B_h^T U_h(\tilde{h})) = G(h, \tilde{h}, U) &\mbox{in} \ \Omega_0\\[5pt]
&U_h(\tilde{h})_{|_{\partial \Omega_0}}=0
\end{aligned}
\end{equation*}
with the source terms
\begin{equation*}\begin{aligned}
    &F(h, \tilde{h}, U, P)=-\eta \nabla \cdot (A_h(\tilde{h})\nabla U) + U\cdot B_h(\tilde{h})\nabla U + B_h(\tilde{h})\nabla P,\\[5pt]&
    G(h, \tilde{h}, U)= \nabla \cdot ( B^T_h(\tilde{h}) U)
    \end{aligned}
\end{equation*}
and
\begin{equation*}
A_h(\tilde{h})=- D_h A_h[h]\tilde{h}, \quad
B_h(\tilde{h})= -D_h B_h[h]\tilde{h},\quad
B_h^T(\tilde{h})= -D_h B_h^T[h]\tilde{h}, \\[5pt]
\end{equation*} where $D_h A_h[h]$, $ D_h B_h[h]$ and $D_h B_h^T[h]$ are the differential operators associated with the maps $h\mapsto A_h$, $h\mapsto B_h$ and $h\mapsto B_h^T$.
Since $\mathcal{L}$ is an isomorphism, we know that $(U_h(\tilde{h}), P_h(\tilde{h}))$ is uniquely determined and, by \eqref{est-iso}, it satisfies
\begin{equation}\label{est-appdiff}\begin{aligned}
    \|U_h(\tilde{h})\|_{W^{2, 2+\sigma}(\Omega_0)} &+ \|P_h(\tilde{h})\|_{W^{1, 2+\sigma}(\Omega_0)} \\[5pt]&\leq C \left( \|F(h, \tilde{h}, U, P)\|_{L^{2+\sigma}(\Omega_0)} + \|G(h, \tilde{h}, U)\|_{W^{1, 2+\sigma}(\Omega_0)} \right).\end{aligned}
\end{equation}
For any $h\in \mathbb{B}_{\beta_0}$, it follows from the explicit expressions \eqref{exp-matrices} that there exists $C(\beta_0)>0$ such that
\begin{equation*}\begin{aligned}
    \|A_h(\tilde{h})\|_{H^2(\Omega_0)}\leq C(\beta_0)&\|\tilde{h}\|_{\mathbb{H}^4}, \qquad   \|B_h(\tilde{h})\|_{H^2(\Omega_0)}\leq C(\beta_0)\|\tilde{h}\|_{\mathbb{H}^4},
    \end{aligned}
\end{equation*} so that, using the continuous embedding $H^2(\Omega_0)\subset L^\infty(\Omega_0)$, the right-hand side of \eqref{est-appdiff} can be estimated by
\begin{equation}\label{est-sources-dif}
    \begin{aligned}
        &\|F(h, \tilde{h}, U, P)\|_{L^{2+\sigma} (\Omega_0)} + \|G(h, \tilde{h}, U)\|_{W^{1,2+\sigma} (\Omega_0)}\\[5pt]&\leq C\left(\|A_h(\tilde{h})\|_{H^2(\Omega_0)}+\|B_h(\tilde{h})\|_{H^2(\Omega_0)}\right)\left( \| U\|_{W^{2, 2+\sigma}(\Omega_0)} + \| P\|_{W^{1, 2+\sigma}(\Omega_0) } \right)
     \\[5pt]&  \leq  C(\beta_0)\|\tilde{h}\|_{\mathbb{H}^4}\left(\| U\|_{W^{2, 2+\sigma}(\Omega_0)} + \| P\|_{W^{1, 2+\sigma}(\Omega_0)} \right).
    \end{aligned}
\end{equation}
Due to \eqref{controlU-u} and \eqref{reg-est}, we then obtain
\begin{equation*}
    \begin{aligned}
    \|U_h(\tilde{h})\|_{W^{2, 2+\sigma}(\Omega_0)} + \|P_h(\tilde{h})\|_{W^{1, 2+\sigma}(\Omega_0)} \leq C(\beta_0)\gamma  \|\tilde{h}\|_{\mathbb{H}^4},
    \end{aligned}
\end{equation*}which, in turn, implies the uniform bound for the norms of the differential operators $D_hU[h]$ and $D_hP[h]$
\begin{equation}\label{uni-bou-diff}
    \sup_{h\in \mathbb{B}_{\beta_0}} \|D_hU[h]\| +  \sup_{h\in \mathbb{B}_{\beta_0}} \|D_hP[h]\| \leq C(\beta_0)\gamma.
\end{equation}

Finally, let us estimate the variation of the lift, defined in \eqref{lift-trans}, with respect to the variation of $h$. Using the same argument and the Trace Theorem as in the proof of Lemma \ref{lem-Lbound}, we have that,  for any $h_1,h_2\in \mathbb{B}_{\beta_0}$,
\begin{equation}\label{diff-lift}\begin{aligned}
&|	L(y, h_1) - 	L(y, h_2) |\\[5pt]&=
\bigg|\int_{\partial \mathcal{B}_y}  \left(\widetilde{\mathbb{T}}_{h_1}(U(h_1), P(h_1)) M_{h_1}n_0 - \widetilde{\mathbb{T}}_{h_2}(U(h_2), P(h_2)) M_{h_2}n_0\right) \bigg|
\\[5pt]&
=\bigg| \int_{\partial \mathcal{B}_y}\bigg[\left(\widetilde{\mathbb{T}}_{h_1}(U(h_1),P(h_1)) - \widetilde{\mathbb{T}}_{h_2}(U(h_2),P(h_2)) \right) M_{h_1}n_0 \\[2pt]&\qquad \qquad\qquad  +\widetilde{\mathbb{T}}_{h_2}(U(h_2),P(h_2)) (M_{h_1} - M_{h_2})n_0\bigg]\bigg|\\[5pt]&
\leq C(\beta_0)\bigg(\|U(h_1)- U(h_2)\|_{W^{2, 2+\sigma}(\Omega_0)}+ \|P(h_1)-P(h_2)\|_{W^{1, 2+\sigma}(\Omega_0)} \\[2pt]&\qquad\qquad  +\left(\|U(h_2)\|_{W^{2,2+\sigma}(\Omega_0)}+ \|P(h_2)\|_{W^{1, 2+\sigma}(\Omega_0)}\right)\|h_1-h_2\|_{\mathbb{H}^4}\bigg).\\[5pt]
\end{aligned}
\end{equation}
 Combining the Mean Value Inequality with \eqref{uni-bou-diff} and using \eqref{controlU-u} together with \eqref{reg-est}, we obtain that
\begin{equation*}\begin{aligned} &\|	L(\cdot, h_1) - 	L(\cdot, h_2) \|_{L^\infty(-1,1)}\\[5pt]&\leq C(\beta_0)\left(\gamma + \|u(h_2)\|_{W^{2, 2+\sigma}(\Omega_{h_2})}+ \|p(h_2)\|_{W^{1, 2+\sigma}(\Omega_{h_2})}\right)\|h_1-h_2\|_{\mathbb{H}^4}\\[5pt]&\leq \gamma C(\beta_0)\|h_1-h_2\|_{\mathbb{H}^4},
\end{aligned}
\end{equation*} which proves the Lipschitz continuity of $L(\cdot, h)$ with respect to $h$.
\end{proof}

\subsection{Proof of Theorem \ref{main-theo}}\label{sec-proof}

We are now in position to set the fixed point method to uniquely solve the coupled system \eqref{BVprobR}-\eqref{beam-pb} and, hence, to prove Theorem \ref{main-theo}.
Consider a given $\overline{h}\in  \mathbb{H}^4\subset   C^{1,1}_0([-1,1])$. Theorem \ref{the_ex_regu} yields a unique pair $(u(\overline{h}), p(\overline{h}))\in W^{2, 2+\sigma}(\Omega_0)\times W^{1, 2+\sigma}(\Omega_0)$ that solves \eqref{BVprobR} with inflow/outflow magnitude $\gamma\in [0, \overline{\gamma})$. The lift $L(\cdot, \overline{h})$ generated by the solution $(u(\overline{h}), p(\overline{h}))$  belongs to $C(-1,1)$ due to Lemma \ref{lem-Lbound} and we view $L(\cdot, \overline{h})$ as the source term of the beam equation \eqref{mod-beam-pb}. Thanks to Proposition \ref{prop-exi-beam}, it admits a unique solution that depends on $\overline{h}$. This defines the map
\begin{equation}\label{fixedpoint-map}
   \mathfrak{h} : \qquad  \begin{array}{ccc}
       \mathbb{B}_{\beta_0}\subset \mathbb{H}^4 & \rightarrow & \mathbb{H}^4\\[5pt]
        \overline{h}& \mapsto & \mathfrak{h}(\overline{h})
    \end{array}
\end{equation}
where $\mathfrak{h}(\overline{h})$ is the unique solution to \eqref{mod-beam-pb}. To continue, we show that $\mathfrak{h}$ is a contraction map on $\mathbb{B}_{\beta_0}$.\par
First, we show that $\mathfrak{h}(\mathbb{B}_{\beta_0})\subset \mathbb{B}_{\beta_0}$ for $\gamma$ sufficiently small. In fact, given any $\overline{h}\in \mathbb{B}_{\beta_0}\subset C^{1,1}_0([-1,1])$, we know from Proposition \ref{prop-exi-beam} that $\mathfrak{h}(\overline{h})\in \mathbb{H}^4$ and, for $\gamma\in [0, \overline{\gamma})$,

\begin{equation*}
    \|\mathfrak{h}(\overline{h})\|_{\mathbb{H}^4}
    \leq C\gamma;
\end{equation*}this implies that there exists $\gamma_0\in (0, \overline{\gamma}]$ such that
\begin{equation}\label{bound-map}
    \| \mathfrak{h}(\overline{h}) \|_{\mathbb{H}^4} < \beta_0,
\end{equation}whenever $\gamma\in [0, \gamma_0)$.
 Consider $\mathfrak{h}(\overline{h}_1)$ and $\mathfrak{h}(\overline{h}_2)$ solutions to \eqref{mod-beam-pb} with source term $L(\cdot, \overline{h}_1)$ and $L(\cdot, \overline{h}_2)$, respectively. Their difference $\mathfrak{h}(\overline{h}_1)-\mathfrak{h}(\overline{h}_2)$ satisfies
\begin{equation}\label{beam-diff}\begin{aligned}
    &\left(\mathfrak{h}(\overline{h}_1) - \mathfrak{h}(\overline{h}_2)\right)'''' + f(\mathfrak{h}(\overline{h}_1)) - f(\mathfrak{h}(\overline{h}_2)) = L(\cdot, \overline{h}_1) - L(\cdot, \overline{h}_2) \qquad \text{in} \quad (-1,1)\\[2pt]
    &\left(\mathfrak{h}(\overline{h}_1) - \mathfrak{h}(\overline{h}_2)\right)(\pm 1) = 0 \\[2pt]&\left(\mathfrak{h}(\overline{h}_1) - \mathfrak{h}(\overline{h}_2)\right)'(\pm 1)=0 \ \text{ or } \  \left(\mathfrak{h}(\overline{h}_1) - \mathfrak{h}(\overline{h}_2)\right)''(\pm 1)=0.\\[5pt]
		\end{aligned}
\end{equation}
 Testing \eqref{beam-diff} with $\mathfrak{h}(\overline{h}_1)-\mathfrak{h}(\overline{h}_2)$ yields
 \begin{equation*}\begin{aligned}
     \|\mathfrak{h}(\overline{h}_1)-\mathfrak{h}(\overline{h}_2) \|^2_{\mathbb{H}} \ + &\int_{-1}^1 \left(f(\mathfrak{h}(\overline{h}_1))-f(\mathfrak{h}(\overline{h}_2))\right) (\mathfrak{h}(\overline{h}_1)-\mathfrak{h}(\overline{h}_2))\\[5pt]&= \int_{-1}^{1}\left(L(\cdot, \overline{h}_1)-L(\cdot, \overline{h}_2)\right)(\mathfrak{h}(\overline{h}_1)-\mathfrak{h}(\overline{h}_2))
     \end{aligned}
 \end{equation*}
which, due to \eqref{assf}, \eqref{SH-embedding} and  \eqref{lip-lift}, implies that
\begin{equation}\label{conv-Hbb}
 \|\mathfrak{h}(\overline{h}_1)-\mathfrak{h}(\overline{h}_2) \|_{\mathbb{H}} \leq 2S_{\mathbb{H}}\|L(\cdot, \overline{h}_1)-L(\cdot, \overline{h}_2)\|_{L^\infty(-1,1)}\leq C \gamma \|\overline{h}_1 - \overline{h}_2\|_{\mathbb{H}^4}
\end{equation}
By elliptic regularity for \eqref{beam-diff}, by \eqref{assf} and the continuous embedding $\mathbb{H}\subset L^\infty(-1,1)$, it follows that
\begin{equation}\label{conv-liftH4}\begin{aligned}
        &\|\mathfrak{h}(\overline{h}_1)- \mathfrak{h}(\overline{h}_2)\|_{\mathbb{H}^4} \\[5pt]&\leq C\left( \|f(\mathfrak{h}(\overline{h}_1)) - f(\mathfrak{h}(\overline{h}_2))\|_{L^2(-1,1)} + \|L(\cdot, \overline{h}_1)- L(\cdot, \overline{h}_2)\|_{L^2(-1,1)}\right)\\[5pt]
        &\leq C\left(  \|\mathfrak{h}(\overline{h}_1)-\mathfrak{h}(\overline{h}_2) \|_{\mathbb{H}} + \|L(\cdot, \overline{h}_1)- L(\cdot, \overline{h}_2)\|_{L^\infty(-1,1)} \right)\\[5pt]&\leq C\gamma\|\overline{h}_1- \overline{h}_2\|_{\mathbb{H}^4},\\[5pt]
        \end{aligned}
\end{equation}where in the last inequality we used \eqref{conv-Hbb} and \eqref{lip-lift}. Then, we deduce from \eqref{conv-liftH4} that there exists  $\gamma_0\in (0, \overline{\gamma}]$, possibly smaller than the one implying \eqref{bound-map}, such that
\begin{equation}\label{contraction-est}
    \|\mathfrak{h}(\overline{h}_1)- \mathfrak{h}(\overline{h}_2)\|_{\mathbb{H}^4}\leq M \|\overline{h}_1- \overline{h}_2\|_{\mathbb{H}^4}
\end{equation} with $0<M<1$. Finally, gathering \eqref{bound-map} and \eqref{contraction-est} together, we conclude that
the map \eqref{fixedpoint-map} is a contraction on $\mathbb{B}_{\beta_0}$ for $\gamma\in [0, \gamma_0)$.  Banach fixed-point theorem then implies the existence and uniqueness of a  fixed point, that is, there exists a unique $h\in \mathbb{B}_{\beta_0}$  such that $\mathfrak{h}(h)=h$. This is equivalent to say that there exists a unique solution $(h, u, p)\in \mathbb{B}_{\beta_0}\times W^{2, 2+\sigma}(\Omega_h)\times W^{1, 2+\sigma}(\Omega_h)$ to the coupled system \eqref{BVprobR}-\eqref{beam-pb}.\par
We now show the Lipschitz continuity of $\gamma\mapsto (h(\gamma), u(\gamma), p(\gamma))$. To this end, let us consider for $\gamma_1, \gamma_2 \in [0, \gamma_0)$, the solutions $(h(\gamma_1), u(\gamma_1), p(\gamma_1))$ and $(h(\gamma_2), u(\gamma_1), p(\gamma_2))$ to the FSI problem  \eqref{BVprobR}-\eqref{beam-pb} with flow magnitude $\gamma_1$ and $\gamma_2$, respectively. In order to compare the two solutions to the fluid boundary-value problem \eqref{BVprobR}, which are defined in different domains $\Omega_{h(\gamma_1)}$ and $\Omega_{h(\gamma_2)}$, we transform one domain into the other through the diffeomorphism  in Lemma \ref{lemma-diffeo}.
Introducing the transformed velocity and  pressure $$U(\gamma_2)= u(\gamma_2)\circ \varphi_{h(\gamma_2) - h(\gamma_1)}, \qquad P(\gamma_2)= u(\gamma_2)\circ \varphi_{h(\gamma_2) - h(\gamma_1)},$$ defined in $\Omega_{h(\gamma_1)}$, it follows that $(U(\gamma_2), P(\gamma_2))$ solves  the transformed problem
\begin{equation}
		\begin{aligned}
			&-\eta\nabla \cdot\left( A_{h(\gamma_2)-h(\gamma_1)} \nabla U(\gamma_2) \right)  +  U(\gamma_2)\cdot B_{h(\gamma_2)-h(\gamma_1)}\nabla U(\gamma_2)\\[5pt]& \qquad \qquad \qquad \qquad \qquad \qquad + B_{h(\gamma_2)-h(\gamma_1) }\nabla P(\gamma_2)=0\quad\text{in}\quad \Omega_{h(\gamma_1)}, \\[5pt]
			&\nabla\cdot (B_{h(\gamma_2)-h(\gamma_1)}^TU(\gamma_2)))=0\quad\text{in}\quad \Omega_{h(\gamma_1)},\\[5pt]&	U(\gamma_2)=0 \quad \text{on} \quad  \partial \mathcal{B}^{h(\gamma_1)} ,\quad U(\gamma_2)=\gamma_2 Ve_1\quad \text{on} \quad \overline{\Gamma_{y}^\pm \setminus \mathcal{B}_{\pm 1}}\cup\Gamma_z^\pm,\\[5pt]
			&U(\gamma_2)= \gamma_2 V_ie_1\quad \text{on} \quad \Gamma_x^-,\quad U(\gamma_2)= \gamma_2 V_oe_1\quad\text{on} \quad \Gamma_x^+.
		\end{aligned}
\end{equation}

 The differences $W= u(\gamma_1)-U(\gamma_2)$ and $Q= p(\gamma_1)-P(\gamma_2)$ solve
\begin{equation}\label{WQ-pb}
		\begin{aligned}
		&-\eta\Delta  W +\nabla Q =
       - u(\gamma_1)\cdot \nabla W - W\cdot \nabla U(\gamma_2)\\[5pt]& \ \qquad \qquad \qquad \quad   + S_1\left(h(\gamma_2)- h(\gamma_1), U(\gamma_2), P(\gamma_2)\right)
        \quad\text{in}\quad \Omega_{h(\gamma_1)},  \\[5pt]
		&\nabla\cdot W =	S_2\left((h(\gamma_2)- h(\gamma_1), U(\gamma_2)\right)\quad\text{in}\quad \Omega_{h(\gamma_1)},\\[5pt]	&	W=0 \quad \text{on} \quad  \partial \mathcal{B}^{h(\gamma_1)},\quad W=(\gamma_1 - \gamma_2) Ve_1\quad \text{on} \quad \overline{\Gamma_{y}^\pm \setminus \mathcal{B}_{\pm 1}}\cup\Gamma_z^\pm,\\[5pt]
			&W= (\gamma_1 - \gamma_2) V_ie_1\quad \text{on} \quad \Gamma_x^-,\quad W = (\gamma_1 - \gamma_2) V_oe_1\quad\text{on} \quad \Gamma_x^+.
		\end{aligned}
\end{equation}
with
\begin{align*}
  &S_1\left(h(\gamma_2)- h(\gamma_1), U(\gamma_2), P(\gamma_2)\right)= - \eta \nabla \cdot \left((A_{h(\gamma_2)-h(\gamma_1)}- \mathbb{I})\nabla U(\gamma_2) \right)\\&\qquad \qquad+ U(\gamma_2)\cdot (B_{h(\gamma_2)- h(\gamma_1)} - \mathbb{I})\nabla U(\gamma_2)+ (B_{h(\gamma_2)- h(\gamma_1)}- \mathbb{I})\nabla P(\gamma_2),\\[5pt]
&S_2\left(h(\gamma_2)- h(\gamma_1), U(\gamma_2)\right)=
   \nabla\cdot \big((B_{h(\gamma_2)-h(\gamma_1))}^T- \mathbb{I})U(\gamma_2)\big).
\end{align*}
Applying \cite[Theorem 10.5.4]{Mazya-Rossmann2010} to \eqref{WQ-pb} then yields
\begin{equation}\label{regest-WQ}\begin{aligned}
   \|W\|_{W^{2, 2+\sigma}(\Omega_{h(\gamma_1)})}& + \|Q\|_{W^{1, 2+\sigma}(\Omega_{h(\gamma_1)})}\\[5pt]
\leq C(\beta_0)\bigg( &
  \|u(\gamma_1)\cdot \nabla W+ W\cdot \nabla U(\gamma_2)\|_{L^{2+\sigma}(\Omega_{h(\gamma_1)})}
+ |\gamma_1-\gamma_2|\\[5pt]&
+\|S_1\left(h(\gamma_2)- h(\gamma_1), U(\gamma_2), P(\gamma_2)\right)\|_{L^{2+\sigma}(\Omega_{h(\gamma_1)})} \\[5pt]&+\|S_2\left(h(\gamma_2)- h(\gamma_1), U(\gamma_2)\right)\|_{W^{1,2+\sigma}(\Omega_{h(\gamma_1)})}
\bigg).
\end{aligned}
\end{equation}
After combining \eqref{controlU-u} (up to obvious changes) with  \eqref{reg-est} and estimating the terms $S_1$, $S_2$ in the right-hand side of \eqref{regest-WQ} in the same way as in \eqref{est-sources-dif}, we infer that there exists $\gamma^*\in (0, \gamma_0]$, depending on $\beta_0$, such that for $\gamma_1, \gamma_2 \in [0, \gamma^*)$,

\begin{equation}\label{WQ-est}\begin{aligned}
    \|W\|_{W^{2, 2+\sigma}(\Omega_{h(\gamma_1)})} +& \|Q\|_{W^{1, 2+\sigma}(\Omega_{h(\gamma_1)})}\\[5pt]&\leq C(\beta_0)\left( |\gamma_1-\gamma_2| + \gamma_2\|h(\gamma_1)- h(\gamma_2)\|_{\mathbb{H}^4}\right).
    \end{aligned}
\end{equation}
Fixing $y\in(-1,1)$, the variation of the lift is
\begin{equation*}\begin{aligned}
   & L(y, h(\gamma_1))- L(y, h(\gamma_2)) \\[5pt]&= \int_{\partial \mathcal{B}_y^{h(\gamma_1)}}\bigg( \mathbb{T}(u(\gamma_1), p(\gamma_1)) n_{h(\gamma_1)}  - \widetilde{\mathbb{T}}_{h(\gamma_2)-h(\gamma_1)}(U(\gamma_2), P(\gamma_2)) M_{h(\gamma_2)-h(\gamma_1)} n_{h(\gamma_1)}\bigg),
    \end{aligned}
\end{equation*} where $n_{h(\gamma_1)}$ is the unit normal vector to $\partial \mathcal{B}^{h(\gamma_1)}$ pointing inside $\mathcal{B}^{h(\gamma_1)}.$
Arguing as in \eqref{diff-lift} and by \eqref{WQ-est},
we get the estimate
\begin{equation}\label{diff-lift-gamma}\begin{aligned}
  & \| L(\cdot, h(\gamma_1))- L(\cdot, h(\gamma_2))\|_{L^\infty(-1,1)}\\[5pt]&\leq C(\beta_0)\left( \|W\|_{W^{2, 2+\sigma}(\Omega_{h(\gamma_1)})} + \|Q\|_{W^{1, 2+\sigma}(\Omega_{h(\gamma_1)})} + \gamma_2 \|h(\gamma_1)- h(\gamma_2)\|_{\mathbb{H}^4}    \right) \\[5pt]&\leq C(\beta_0)\left( |\gamma_1-\gamma_2|+ \gamma_2 \|h(\gamma_1)- h(\gamma_2)\|_{\mathbb{H}^4}    \right) .\\[5pt]
  \end{aligned}
\end{equation}
Since $h(\gamma_1)$ and $h(\gamma_2)$ solve the beam problem \eqref{beam-pb}, it follows that
\begin{equation}\label{beam-pb-diff}
		\begin{aligned}
			&(h(\gamma_1)- h(\gamma_2))''''+ f(h(\gamma_1)) -f(h(\gamma_2)) = L(\cdot, h(\gamma_1)) - L(\cdot, h(\gamma_2))   \ \ \text{in} \ \ (-1,1) \\[5pt]
			&(h(\gamma_1)- h(\gamma_2))(\pm 1) = 0 \\[5pt]  &(h(\gamma_1)- h(\gamma_2))'(\pm 1)=0 \ \text{ or } \ (h(\gamma_1)- h(\gamma_2))''(\pm 1)=0.\\[5pt]
		\end{aligned}
	\end{equation}
Analogously to \eqref{conv-liftH4}, by elliptic regularity for \eqref{beam-pb-diff}, exploiting \eqref{assf} and \eqref{diff-lift-gamma}, we derive
\begin{equation*}\begin{aligned}
    \|h(\gamma_1)-h(\gamma_2)\|_{\mathbb{H}^4} &\leq C\| L(\cdot, h(\gamma_1))- L(\cdot, h(\gamma_2))\|_{L^\infty(-1,1)}\\[5pt]&\leq C(\beta_0)\left( |\gamma_1-\gamma_2|+ \gamma_2 \|h(\gamma_1)- h(\gamma_2)\|_{\mathbb{H}^4}    \right),
    \end{aligned}
\end{equation*}which implies that there exists (a possibly smaller) $\gamma^*\in (0, \gamma_0)$ such that
\begin{equation*}
\|h(\gamma_1)-h(\gamma_2)\|_{\mathbb{H}^4}\leq C(\beta_0) |\gamma_1 - \gamma_2| \quad \text{for any} \quad \gamma_1, \gamma_2\in [0, \gamma^*).
\end{equation*} Consequently, by \eqref{WQ-est}, we also infer that
\begin{equation*}
\|u(\gamma_1)-U(\gamma_2)\|_{W^{2, 2+\sigma}(\Omega_{h(\gamma_1)})} +\|p(\gamma_1)-P(\gamma_2)\|_{W^{1, 2+\sigma}(\Omega_{h(\gamma_1)})} \leq C(\beta_0) |\gamma_1 - \gamma_2|.
\end{equation*}

The proof is so complete.

\subsection{The symmetric case}\label{symmetry}

We prove here that, under suitable symmetry assumptions, Theorems \ref{the_ex_regu} and \ref{main-theo} slightly modify.\par
Not only we assume the necessary condition \eqref{flux}, but we also assume that, for some
$$V\in W^{2-\tfrac{1}{2+\sigma}, 2+\sigma }(-1,1)^2\quad\mbox{s.t.}\quad
V(y,\pm1)=0\mbox{ for  }|y|\le1\mbox{ and }V(\pm1 ,z)=0\mbox{ for  }|z|\le1,$$
the inflow/outflow satisfy $V_i, V_o \in W^{2-\tfrac{1}{2+\sigma}, 2+\sigma }(-1,1)^2$ and
\begin{equation}\label{symminout}
V_i(y,z)=V_o(y,z)=V(y,z)\mbox{ on }\partial[-1,1]^2,\qquad V_i(y,z)=V_i(y,-z)\mbox{ for }|z|\le1.
\end{equation}
We also need the beam to be symmetric with respect to the plane $z=0$, that is,
\begin{equation}\label{symbeam}
(x,y,z)\in\mathcal{B}\ \Longleftrightarrow\ (x,y,-z)\in\mathcal{B}.
\end{equation}

Under these assumptions, Theorem \ref{the_ex_regu} becomes the following statement that we quote without proof since it can be obtained by
arguing as in \cite{clara,GS}.

\begin{theorem}\label{symmNS} Assume \eqref{symminout}-\eqref{symbeam} and let $h\equiv0$. For any $\gamma\geq0$, \eqref{eq_main} admits a weak solution $(u, p)$ in $\Omega_h$ and (at least)
one of them is symmetric, that is, for a.e.\ $(x,y,z)\in\Omega$
\begin{equation}\label{symmsol}
\begin{array}{ll}
u_1(x,y,z)=u_1(x,y,-z),\qquad & u_2(x,y,z)=u_2(x,y,-z),\\
u_3(x,y,z)=-u_3(x,y,-z),\qquad & p(x,y,z)=p(x,y,-z).
\end{array}
\end{equation}
There exists $\overline{\gamma}>0$ such that the weak solution is unique for $\gamma\in [0, \overline{\gamma})$ and it satisfies \eqref{symmsol}. Moreover,
any weak solution is a strong solution, according to Definition \ref{def_1}.
\end{theorem}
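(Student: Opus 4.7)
The plan is to reproduce the existence scheme of Theorem \ref{the_ex_regu} within the subspace of $z$-symmetric fields, then to let Theorem \ref{the_ex_regu} itself supply uniqueness on $[0,\overline\gamma)$ and the strong-regularity upgrade. Since $h\equiv 0$, \eqref{symbeam} makes $\Omega$ invariant under the reflection $R(x,y,z)=(x,y,-z)$, and \eqref{symminout} makes the boundary datum $u_*$ in \eqref{u*} invariant under the induced involution
\begin{equation*}
(Tu)(x,y,z)=\bigl(u_1,\,u_2,\,-u_3\bigr)(x,y,-z);
\end{equation*}
a direct check shows that $T$ commutes with $-\Delta$ and $\nabla\cdot$, that $T(u\cdot\nabla u)=(Tu)\cdot\nabla(Tu)$, and that $\nabla(p\circ R)=T(\nabla p)$, so the involution $(u,p)\mapsto(Tu,p\circ R)$ preserves \eqref{eq_main}.

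First I would introduce the closed subspaces
\begin{equation*}
H^1_s(\Omega)=\{u\in H^1(\Omega):Tu=u\},\quad V_s(\Omega)=V(\Omega)\cap H^1_s(\Omega),\quad L^2_s(\Omega)=\{p\in L^2(\Omega):p=p\circ R\},
\end{equation*}
and replace the Leray--Hopf extension $s$ used in the proof of Theorem \ref{the_ex_regu} by its symmetrization $\tfrac12(s+Ts)$. This stays solenoidal, still has trace $u_*$ because $u_*$ is $T$-invariant, and inherits Hopf's $\varepsilon$-small convective estimate against test functions in $H^1_{0,s}(\Omega)\defeq H^1_s(\Omega)\cap H^1_0(\Omega)$. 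Running the Galerkin scheme of \cite[Thm.\ IX.4.1]{GA1} on a Hilbert basis of $V_s(\Omega)$ (obtained by projecting a Stokes eigenbasis of $V(\Omega)$ onto the symmetric eigenspace, an invariant subspace since $T$ commutes with the Stokes operator) yields a symmetric $v\in V_s(\Omega)$ solving the homogenized problem; the variational identity then extends from $V_s(\Omega)$ to all of $V(\Omega)$ by decomposing each test field $\varphi\in V(\Omega)$ into its symmetric and antisymmetric parts under $T$, the latter pairing to zero against the symmetric $v$. Thus $u=v+s$ is a weak solution satisfying \eqref{symmsol}, and the pressure recovered via De~Rham with zero mean automatically lies in $L^2_s(\Omega)$.

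For the remaining claims no further work is needed. The uniqueness argument in Theorem \ref{the_ex_regu} uses no symmetry, so for $\gamma\in[0,\overline\gamma)$ the weak solution is unique and therefore coincides with the symmetric one just constructed; and the reflection plus Maz'ya--Rossmann regularity chain of Theorem \ref{the_ex_regu} carries over verbatim, yielding the strong-solution upgrade. I expect the main obstacle to be the symmetric Leray--Hopf step: the classical construction in \cite{LE1,Hopf1941} does not automatically produce a $T$-invariant extension, and one must check that symmetrization preserves both the flux normalizations in \eqref{flux} and Hopf's $\varepsilon$-smallness of the convective nonlinearity. Once this is in place, the rest is a routine transcription of the arguments in \cite{clara,GS}.
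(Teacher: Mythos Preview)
The paper does not actually prove Theorem \ref{symmNS}: it states the result ``without proof since it can be obtained by arguing as in \cite{clara,GS}.'' Your proposal is precisely the argument one finds in those references---restrict the Galerkin scheme to the $T$-invariant subspace, use a symmetric solenoidal extension, extend the variational identity to all of $V(\Omega)$ via the symmetric/antisymmetric decomposition, and then import uniqueness and regularity verbatim from Theorem \ref{the_ex_regu}. So your approach is correct and matches what the paper intends.

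Two small remarks. First, your symmetrization $\tfrac12(s+Ts)$ does preserve Hopf's $\varepsilon$-estimate: since the trilinear form satisfies $b(Tu,Tv,Tw)=b(u,v,w)$ under the change of variable $z\mapsto-z$, one gets $-b(v,Ts,v)=-b(Tv,s,Tv)\le\varepsilon\|\nabla Tv\|^2=\varepsilon\|\nabla v\|^2$, so the obstacle you flag is not serious. Second, note that \eqref{symminout} as written in the paper only imposes $z$-evenness on $V_i$, not on $V_o$; for $u_*$ to be $T$-invariant on $\Gamma_x^+$ one also needs $V_o(y,z)=V_o(y,-z)$, which is presumably an implicit assumption (and is indeed required in \cite{clara,GS}). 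You may want to state this explicitly rather than assert that ``\eqref{symminout} makes $u_*$ invariant.''
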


Clearly, also \eqref{reg-est} holds. By combining Theorems \ref{main-theo} and \ref{symmNS}, we obtain the following statement.

\begin{theorem}\label{main-theo-symm}
Assume \eqref{symminout}-\eqref{symbeam}. There exists $\beta_0>0$ and $\gamma_0\in (0, \overline{\gamma})$ such that for any $\gamma \in [0,\gamma_0)$, the FSI problem \eqref{BVprobR}-\eqref{beam-pb} admits a unique solution $$(h, u, p)\in\mathbb{B}_{\beta_0}\times W^{2, 2+\sigma}(\Omega_h)\times W^{1, 2+\sigma}(\Omega_h)$$
and such solution is such that
$$h\equiv0,\qquad (u,p)\mbox{ satisfy \eqref{symmsol}}.$$
In particular, with the notation in \eqref{load}, $L(y, u(h), p(h))\equiv0$ in $[-1,1]$.
\end{theorem}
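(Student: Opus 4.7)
The plan is to exhibit the triple $(0, u, p)$ as a solution and then invoke the uniqueness part of Theorem \ref{main-theo} to conclude it is the only one. First, I would take $h\equiv 0$ (which clearly lies in $\mathbb{B}_{\beta_0}$), so that $\Omega_h=\Omega_0=\Omega$ and the cross-sections $\mathcal{B}_y^h=\mathcal{B}_y$ are themselves symmetric with respect to $\{z=0\}$ by \eqref{symbeam}. Theorem \ref{symmNS} then produces a strong solution $(u,p)\in W^{2,2+\sigma}(\Omega)\times W^{1,2+\sigma}(\Omega)$ of \eqref{BVprobR} with the symmetry properties \eqref{symmsol}, unique in the small-$\gamma$ regime.

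The central computation is that for such a symmetric $(u,p)$ the lift vanishes identically, i.e. $L(y,0)=0$ for every $y\in(-1,1)$. Using \eqref{symmsol}, a direct differentiation gives that $\partial_z u_1$, $\partial_x u_3$, $\partial_z u_2$, $\partial_y u_3$ are all odd in $z$, while $\partial_z u_3$ and $p$ are even in $z$. Hence the entries of the Cauchy stress satisfy
\begin{equation*}
\mathbb{T}_{31}(x,y,-z)=-\mathbb{T}_{31}(x,y,z),\quad \mathbb{T}_{32}(x,y,-z)=-\mathbb{T}_{32}(x,y,z),\quad \mathbb{T}_{33}(x,y,-z)=\mathbb{T}_{33}(x,y,z).
\end{equation*}
On the other hand, by \eqref{symbeam} the outward unit normal to $\partial\mathcal{B}_y$ at the reflected point satisfies $n(x,y,-z)=(n_1,n_2,-n_3)(x,y,z)$. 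Combining these two observations, each of the three summands in $e_3\cdot \mathbb{T}(u,p)\,n$ is odd under $z\mapsto -z$ along $\partial\mathcal{B}_y$, so that the integral in \eqref{load} vanishes and $L(\cdot,0)\equiv 0$.

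With the lift vanishing, the beam equation \eqref{beam-pb} reduces to $h''''+f(h)=0$ with the prescribed boundary conditions. Testing against $h\in\mathbb{H}$ yields
\begin{equation*}
\|h\|_{\mathbb{H}}^{2}+\int_{-1}^{1}f(h)\,h\,dy=0,
\end{equation*}
and since $f(0)=0$ and $f$ is non-decreasing, the integrand $f(h)h$ is pointwise non-negative; therefore both terms vanish and $h\equiv 0$. Thus $(0,u,p)$ is a solution to the coupled system \eqref{BVprobR}--\eqref{beam-pb} with the required properties.

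Finally, since Theorem \ref{main-theo} holds under exactly the same smallness regime and guarantees uniqueness in $\mathbb{B}_{\beta_0}\times W^{2,2+\sigma}(\Omega_h)\times W^{1,2+\sigma}(\Omega_h)$ (up to shrinking $\gamma_0$), the triple we have constructed is the unique solution in that class. I expect the only nontrivial step to be the symmetry-based cancellation proving $L(\cdot,0)\equiv 0$: once the parity of each entry of $\mathbb{T}(u,p)\,n$ is checked, the rest is a direct application of Theorems \ref{symmNS} and \ref{main-theo} together with the elementary uniqueness argument for the beam equation with zero forcing.
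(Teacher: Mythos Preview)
Your proposal is correct and follows essentially the same approach as the paper: exhibit $(0,u,p)$ as a solution via Theorem \ref{symmNS}, verify $L(\cdot,0)\equiv0$, observe that $h''''+f(h)=0$ together with \eqref{assf} forces $h\equiv0$, and then invoke the uniqueness from Theorem \ref{main-theo}. In fact you supply more detail than the paper does: the authors simply assert $L(y,u(h),p(h))\equiv0$ (pointing to the 2D argument in \cite{clara}), whereas you carry out the parity check on the entries of $\mathbb{T}(u,p)\,n$ explicitly, and you justify ``$h''''+f(h)=0\Rightarrow h\equiv0$'' by the energy identity $\|h\|_{\mathbb{H}}^2+\int f(h)h=0$ rather than just citing \eqref{assf}.
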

\begin{proof} It follows the lines of the proof of \cite[Theorem 2.1]{clara} with one crucial difference, in \eqref{BVprobR}-\eqref{beam-pb} the beam is {\em free to move}.
So, let us just explain how to overcome this difficulty.\par
First, we {\em assume} that $h\equiv0$. Then Theorem \ref{symmNS} tells us that $(u,p)$ satisfies \eqref{symmsol}. For any such pair, we have $L(y, u(h), p(h))\equiv0$ in $[-1,1]$ and, hence,
instead of \eqref{compatibility}, the corresponding $h$ satisfies
$$
h''''(y)+f\big(h(y)\big)=0\quad\mbox{in }(-1,1)
$$
plus the boundary conditions. From \eqref{assf} this implies that $h\equiv0$, thereby confirming the initial guess. Since Theorem \ref{main-theo} ensures uniqueness of
$(h, u, p)$ solving \eqref{BVprobR}-\eqref{beam-pb}, the triple $(0,u(0),p(0))$ is the unique solution.\end{proof}

The last statement in Theorem \ref{main-theo-symm} has a clear (and surprising) physical interpretation: {\em there is no lift force acting on a symmetric (clamped or hinged) beam in a wind tunnel,
as long as the (symmetric) inflow/outflow remains sufficiently small}.

\section*{Appendix}\label{sec_comp}

	Let us first derive the following upper bounds for the constant $S_\mathbb{H}$ in \eqref{SH-embedding}:
    \begin{center}
(i) \ if $\mathbb{H}= H_0^2(-1,1)$, then $S_\mathbb{H}\le\frac{\sqrt{2}}{2}$; \quad
(ii) \ if $\mathbb{H}= H^2\cap H_0^1(-1,1)$, then
$S_{\mathbb{H}} \leq \frac{4\sqrt{2}}{3}.$
\end{center}

\begin{proof}
Suppose that $\mathbb{H}= H_0^2(-1,1)$. From the Fundamental Theorem, for any $x\in(-1,1)$ we have
\begin{equation}\label{eq_8}
h(x)=\int_{-1}^xh'(t)\,{\rm d}t=\int_{-1}^x\int_{-1}^th''(s)\,{\rm d}s\,{\rm d}t
\end{equation}
and similarly
\begin{equation}\label{eq_9}
h(x)=-\int_{-1}^x\int_t^1h''(s)\,{\rm d}s\,{\rm d}t.
\end{equation}
Summing \eqref{eq_8} with \eqref{eq_9}, we get
\begin{equation*}
2h(x)=\int_{-1}^x\int_{-1}^th''(s)\,{\rm d}s\,{\rm d}t-\int_{-1}^x\int_t^1h''(s)\,{\rm d}s\,{\rm d}t
\end{equation*}
hence
\begin{equation}\label{eq_10}\begin{aligned}
2|h(x)|&\le\int_{-1}^x\int_{-1}^t|h''(s)|\,{\rm d}s\,{\rm d}t+\int_{-1}^x\int_t^1|h''(s)|\,{\rm d}s\,{\rm d}t\\
&=\int_{-1}^x\int_{-1}^1|h''(s)|\,{\rm d}s\,{\rm d}t =(x+1)\left\|h''\right\|_{L^1(-1,1)}.
\end{aligned}
\end{equation}
Analogously, since
\begin{equation*}
h(x)=-\int_x^1\int_{-1}^th''(s)\,{\rm d}s\,{\rm d}t\hspace{.3in}\mbox{and}\hspace{.3in}h(x)=\int_x^1\int_t^1h''(s)\,{\rm d}s\,{\rm d}t
\end{equation*}
we find that
\begin{equation}\label{eq_11}
2|h(x)|\le(1-x)\left\|h''\right\|_{L^1(-1,1)}.
\end{equation}
Summing \eqref{eq_10} with \eqref{eq_11}, and using H\"older inequality we get
\begin{equation*}
|h(x)|\le\tfrac{1}{2}\left\|h''\right\|_{L^1(-1,1)}\le\tfrac{\sqrt{2}}{2}\left\|h''\right\|_{L^2(-1,1)}.
\end{equation*}
Since $x\in(-1,1)$ is arbitrary, we arrive at
\begin{equation}\label{eq_13}
\left\|h\right\|_{L^\infty(-1,1)}\le\tfrac{\sqrt{2}}{2}\left\|h''\right\|_{L^2(-1,1)}.
\end{equation}
and (i) is proved. \par
Suppose that $h\in H^2\cap H_0^1(-1,1)$. Since $h=0$ at $x=\pm1$ there exists $t_0\in[-1,1]$ be such that $h'(t_0)=0$. Then, from the Fundamental Theorem, for any $x\in(-1,1)$, and from H\"older inequality, we have
\begin{align*}
h(x)&=\int_{-1}^xh'(t)\,{\rm d}t=\int_{-1}^x\int_{t_0}^th''(s)\,{\rm d}s\,{\rm d}t\le\left\|h''\right\|_{L^2(-1,1)}\int_{-1}^1\sqrt{|t-t_0|}\,{\rm d}t.
\end{align*}
Since the function $t_0\mapsto\int_{-1}^1\sqrt{|t-t_0|}\,{\rm d}t$ attains its maximum at $t_0=\pm1$, we get
\begin{equation*}
\left\|h\right\|_{L^\infty(-1,1)}\le\tfrac{4\sqrt{2}}{3}\left\|h''\right\|_{L^2(-1,1)},
\end{equation*}
which proves (ii).\end{proof}

With other calculus tools we also prove that
\begin{center}
the norm $\|\cdot\|_{\mathbb{H}^4}$ defined in \eqref{H4norm}
is equivalent to \\the norm $\|\cdot\|_{H^4(-1,1)}$ in the closed subspace	$\mathbb{H}^4\subset H^4(-1,1)$.
\end{center}
\begin{proof} Let $h\in\mathbb{H}^4$. The definition \eqref{Hnorm} shows that $\|h''\|_{L^2(-1,1)}$ bounds both $\|h'\|_{L^2(-1,1)}$ and $\|h\|_{L^2(-1,1)}$. Hence, it suffices to prove that
$\|h''''\|_{L^2(-1,1)}$ bounds $\|h''\|_{L^2(-1,1)}$ and $\|h'''\|_{L^2(-1,1)}$.\par
The first bound follows from two integration by parts and the H\"older inequality:
\begin{eqnarray*}
\|h''\|_{L^2(-1,1)}^2 &=& -\int_{-1}^{1}h'''h'+[h''h']_{-1}^1=\int_{-1}^{1}h''''h-[h'''h]_{-1}^1\\
&\le& \|h''''\|_{L^2(-1,1)}\|h\|_{L^2(-1,1)}\le C\|h''''\|_{L^2(-1,1)}\|h''\|_{L^2(-1,1)}
\end{eqnarray*}
where the boundary terms vanish in both cases and the last inequality follows from the above mentioned estimate $\|h\|_{L^2(-1,1)}\le C\|h''\|_{L^2(-1,1)}$.\par
For the second bound, we need to distinguish the two cases. If $h''(\pm1)=0$, then an integration by parts shows that
$$
\|h'''\|_{L^2(-1,1)}^2 = -\int_{-1}^{1}h''''h''+[h'''h'']_{-1}^1\le\|h''''\|_{L^2(-1,1)}\|h''\|_{L^2(-1,1)}\le C\|h''''\|_{L^2(-1,1)}^2
$$
where the boundary term vanishes because $h''(\pm1)=0$ and the last inequality follows from the previously proved estimate.\par
If $h'(\pm1)=0$, we give a simple one-dimensional proof by repeatedly using the Rolle Theorem. From $h(\pm1)=0$ we infer that there exists $y_0\in(-1,1)$ such that $h'(y_0)=0$.
Subsequently, there exist $y_1\in(-1,y_0)$ and $y_2\in(y_0,1)$ such that $h''(y_1)=h''(y_2)=0$. Finally, there exists $y_3\in(y_1,y_2)$ such that $h'''(y_3)=0$.
Then, by the Fundamental Theorem we get
$$
h'''(y)=\int_{y_3}^{y}h''''(t)dt
$$
and we conclude using the H\"{o}lder inequality.\end{proof}

\subsection*{Acknowledgments}
VB, EB and FG  were partially supported by the 2023-27 Excellence Department Grant of MUR (Italian Ministry) and by the GNAMPA group of the Istituto Nazionale di Alta Matematica (INdAM). VB and FG were also partially supported by the PRIN project {\it Partial differential equations and related geometric-functional inequalities}.
EB was also partially supported by the Horizon Europe EU Research and Innovation Programme through the Marie Sklodowska-Curie THANAFSI Project No. 101109475.

\subsection*{Conflicts of Interest} We confirm that we do not have any conflict of interest.

\subsection*{Data Availability} The manuscript has no associated data.

{\small
\bibliographystyle{emsplain}

}
\end{document}